\documentclass[11pt]{article}
\usepackage[utf8]{inputenc}
\usepackage[english]{babel}
\usepackage[deletedmarkup=xout]{changes}
\definechangesauthor[name=DANIELE, color=blue]{DAN}
\definechangesauthor[name=ALESSANDRO, color=violet]{DAN-ALE OK}
\definechangesauthor[name=ALESSANDRO2, color=red]{ALE-GIU}
\usepackage{cancel}
\usepackage{csquotes}
\usepackage{graphicx}
\usepackage{yhmath}
\usepackage{color}
\usepackage[utf8]{inputenc}
\usepackage{appendix}
\usepackage{amsmath}
\usepackage{amsthm}
\usepackage{amsfonts}
\usepackage{amssymb}
\usepackage{longtable}
\usepackage{cite}
\usepackage{url}
\usepackage{graphics}
\usepackage{subcaption}
\usepackage{epsfig}
\usepackage{geometry}
\usepackage{verbatim}
\usepackage{tikz}
\usetikzlibrary{fit,shapes.misc}

\usepackage{listings,xcolor}
\definecolor{dkgreen}{rgb}{0,.6,0}
\definecolor{dkblue}{rgb}{0,0,.6}
\definecolor{dkyellow}{cmyk}{0,0,.8,.3}

\newtheorem{theorem}{Theorem}[section]

\newtheorem{remark}[theorem]{Remark}
\newtheorem{cor}[theorem]{Corollary}
\newtheorem{prop}[theorem]{Proposition}
\newtheorem{definition}[theorem]{Definition}

\def\field{\mathbb{}}

\title{A new infinite family of maximum $h$-scattered $\mathbb{F}_q$-subspaces of $V(m(h+1),q^n)$ and associated MRD codes}


\author{Daniele Bartoli\textsuperscript{1}, Alessandro Giannoni\textsuperscript{2} and Giuseppe Marino\textsuperscript{2}}

\newcommand{\Addresses}{{
		\bigskip
		\footnotesize
		
		\noindent \textsuperscript{1}\textsc{Department of Mathematics and Informatics,\\ 
        University of Perugia,\\ Perugia,\\ Italy}\par\nopagebreak
		\textit{e-mail addresses}: \texttt{daniele.bartoli@unipg.it}\\
		\medskip
  
		\noindent \textsuperscript{2}\textsc{Department of Mathematics and applications "Renato Caccioppoli",\\
			University Federico II of Napoli,\\ Napoli,\\
			Italy}\par\nopagebreak
		\textit{e-mail address}: \texttt{\{alessandro.giannoni, giuseppe.marino\}@unina.it}}}
  
  \date{}

\DeclareMathOperator{\dd}{d}
\DeclareMathOperator{\rk}{rk}

\begin{document}
\maketitle

\begin{abstract}
   The exploration of linear subspaces, particularly scattered subspaces, has garnered considerable attention across diverse mathematical disciplines in recent years, notably within finite geometries and coding theory. Scattered subspaces play a pivotal role in analyzing various geometric structures such as blocking sets, two-intersection sets, complete arcs, caps in affine and projective spaces over finite fields and rank metric codes. This paper introduces a new infinite family of $h$-subspaces, along with their associated MRD codes. Additionally, it addresses the task of determining the generalized weights of these codes. Notably, we demonstrate that these MRD codes exhibit some larger generalized weights compared to those previously identified.
\end{abstract}

\makeatother
\newcommand{\Prf}{\noindent{\bf Proof}.\quad }
\renewcommand{\labelenumi}{(\alph{enumi})}

\def\as{{^{\sigma}}}
\def\e{{\`{e }}}
\def\a{{\`{a}}}
\def\qll{{^{q^{\ell}}}}
\def\qi{{^{q^I}}}
\def\qih{^{q^h}}
\def\qjh{^{q^{h+1}}}
\def\qil{^{q^{\ell+1}}}
\def\qjl{^{q^{\ell+2}}}
\def\qii{{^{q^{2I}}}}
\def\qj{{^{q^J}}}
\def\qjj{{^{q^{2J}}}}
\def\qio{{^{q^{I_0}}}}
\def\qjo{{^{q^{J_0}}}}
\def\qk{{^{q^K}}}
\def\qji{{^{q^{J-I}}}}
\def\qij{{^{q^{I+J}}}}
\def\qjia{{^{q^{J-I}+1}}}
\def\qija{{^{q^{I-J}+1}}}
\def\field{{\mathbb F_{q^n}}}
\def\us{U_{\alpha,\beta}}
\def\fax{{x^{q^I} +\alpha y^{q^J}}}
\def\fbx{{x\qj +\beta z\qi}}
\def\fcx{{y\qi +\gamma z\qj}}
\def\faxl{{\lambda\qi x\qi +\alpha\lambda\qj y\qj}}
\def\fbxl{{\lambda\qj x\qj +\beta\lambda\qi z\qi}}
\def\fcxl{{\lambda\qi y\qi +\gamma\lambda\qj z\qj}}
\newcommand{\fal}[2]{{\lambda\qi #1\qi +\alpha\lambda\qj #2\qj}}
\newcommand{\fbl}[2]{{\lambda\qj #1\qj +\beta\lambda\qi #2\qi}}
\newcommand{\fcl}[2]{{\lambda\qi #1\qi +\gamma\lambda\qj #2\qj}}
\newcommand{\fa}[2]{{#1^{q^I} +\alpha #2^{q^J}}}
\newcommand{\fb}[2]{{#1\qj +\beta #2\qi}}
\newcommand{\fc}[2]{{#1\qi +\gamma #2\qj}}

\def\F{\mathbb{F}}
\def\cD{\mathcal{D}}

{\bf Keywords:} $h$-scattered linear sets, Scattered polynomials, linearized polynomials, MRD codes, finite fields\\
\noindent{\bf MSC:} 94B05, 51E20, 94B27, 11T06

\section{Introduction}
Linear subspaces, particularly scattered subspaces, have been extensively studied in recent years across various branches of mathematics, primarily finite geometries and coding theory. These subspaces hold particular interest due to their relevance and applications in different contexts.

In finite geometries, scattered subspaces are essential in the analysis of various structures such as blocking sets, two-intersection sets, complete arcs, caps in affine and projective spaces over finite fields and rank metric codes

Overall, the exploration of scattered subspaces has led to notable advancements in both theoretical understanding and practical implementations, enriching our knowledge of finite geometries and coding theory while also facilitating the development of efficient algorithms and techniques.

This article provides a broader view regarding specific families of scattered subspaces, offering a family of examples that exhibit notable additional characteristics. These constructions showcase the power of polynomial construction in obtaining such objects.


Consider a prime power \( q \) and \( r, n\in\mathbb{N} \). Let \( V \) denote an \( r \)-dimensional vector space over \( \mathbb{F}_{q^n} \).  For any $k$-dimensional $\mathbb{F}_q$-vector subspace $U$ of $V$, the set $L(U) \subset\Lambda=\mathrm{PG}(V, q^n)$ defined by the non-zero vectors of $U$ is called an $\mathbb{F}_q$-\emph{linear set} of $\Lambda$ of \emph{rank} $k$, i.e.
\[ L(U)=\{\langle {\mathbf u} \rangle_{\mathbb{F}_{q^n}}: {\mathbf u} \in U\setminus \{\mathbf{0} \}  \}.\]

Notably, different vector subspaces can define the same linear set. Thus, we always analyze a linear set along with the \( \mathbb{F}_q \)-vector subspace defining it simultaneously.

Let $\Omega=\mathrm{PG}(W,\mathbb{F}_{q^n})$ be a subspace of $\Lambda$ and let $L(U)$ be an $\mathbb{F}_q$-linear set of $\Lambda$. We say that $\Omega$ has \emph{weight} $i$ in $L(U)$ if $\dim_{\mathbb{F}_q}(W\cap U)=i$. In particular, a point of $\Lambda$ belongs to $L(U)$ if and only if it has weight at least $1$. Also, for any $\mathbb{F}_q$-linear set $L(U)$ of rank $k$, 
\[|L(U)|\leq \frac{q^{k}-1}{q-1}.\]
When the equality holds, i.e.\ all the points of $L(U)$ have weight $1$, we call $L(U)$ a \emph{scattered} linear set and $U$ a \emph{scattered} $\F_q$-subpace of $V$. A scattered $\mathbb{F}_q$-linear set $L(U)$ of highest possible rank is called a \emph{maximum scattered} $\mathbb{F}_q$-\emph{linear set} (and $U$ is said to be a \emph{maximum scattered} $\F_q$-subspace).

Recent years have witnessed an exploration of scattered and maximum scattered subspaces through suitable polynomial representations. This method was initiated in \cite{sheekey_new_2016}. For every \( n \)-dimensional \( \mathbb{F}_q \)-subspace \( U \) of \( \mathbb{F}_{q^n}\times \mathbb{F}_{q^n} \), there exists an appropriate basis of \( \mathbb{F}_{q^n}\times \mathbb{F}_{q^n} \) and an \( \mathbb{F}_q \)-linearized polynomial \( f(x)=\sum A_i x^{q^i} \in \mathbb{F}_{q^n}[x] \) with degree less than \( q^n \), such that \( U= \{ (x, f(x)) : x\in \mathbb{F}_{q^n}  \}.\) Subsequently, maximum scattered linear sets in \( \mathrm{PG}(1,q^n) \) can be described via so-called scattered polynomials; refer to \cite{sheekey_new_2016}. Generalizations of these connections led to the index description of scattered polynomials \cite{BZ2018} and scattered sequences \cite{BMNV2022, BGM2024}. 

In \cite{CsMPZ2019}, a particular category of scattered subspaces was introduced.

\begin{definition}
	Let $V$ be an $r$-dimensional $\F_{q^n}$-vector space.
	An $\F_q$-subspace $U$ of $V$ is called $h$-scattered, $0<h \leq r-1$, if $\langle U \rangle_{\F_{q^n}}=V$ and each $h$-dimensional $\F_{q^n}$-subspace of $V$ meets $U$ in an $\F_q$-subspace of dimension at most $h$. An $h$-scattered subspace of highest possible dimension is called a maximum $h$-scattered subspace.
\end{definition}

With this definition, the $1$-scattered subspaces are the scattered subspaces generating $V$ over $\F_{q^n}$. When $h=r$, the above definition holds true for $n$-dimensional $\F_q$-subspaces of $V$, which define subgeometries of $\mathrm{PG}(V,\F_{q^n})$.
If $h=r-1$ and $\dim_{\F_q} U=n$, then $U$ defines a scattered $\F_q$-linear set with respect to hyperplanes; see \cite[Definition 14]{ShVdV}.

In \cite[Theorem 2.3]{CsMPZ2019} it has been proved that for an $h$-scattered subspace $U$ of $V(r,q^n)$, if $U$ does not define a subgeometry, then
\begin{equation}\label{hscatbound}
\dim_{\F_q} U \leq \frac{rn}{h+1}.
\end{equation}
The $h$-scattered subspaces whose dimension reaches Bound \eqref{hscatbound} are called \emph{maximum}. Also, $h$-scattered $\F_q$-subspaces of $V(r,q^n)$ exist whenever $h+1 \mid r$ for any values of $q$ (cf. \cite[Theorem 2.6]{CsMPZ2019}).

Whereas, if $h+1$ is not a divisor of $r$, in \cite[Theorem 3.6]{CsMPZ2019} it has been proved the  existence of maximum $(n-3)$-scattered $\F_q$-subspaces of $V(r(n-2)/2,q^n)$, for $n\geq 4$ even and $r\geq 3$ odd.

In addition, in \cite{BGGM} the authors prove the existence of maximum $2$-scattered $\F_q$-subspaces of $V(r,q^3)$, where $r\geq 3$ and $r\ne 5$, for any value of $q$ and of $V(r,q^6)$, where $r\geq 3$ and $r\ne 5$, with $q=2^h$ and $h\geq 1$ odd; see \cite[Main Theorem]{BGGM}.

In \cite[Corollary 4.4]{ShVdV}, the \( (r-1) \)-scattered subspaces of \( V(r,q^n) \) reaching Bound \eqref{hscatbound}, i.e., of dimension \( n \), have been shown to be equivalent to MRD-codes of \( \F_{q}^{n\times n} \) with a minimum rank distance \( n-r+1 \) and with the left or right idealizer isomorphic to \( \F_{q^n} \). In \cite[Sec. 5]{zini2021scattered}, a connection between maximum \( h \)-scattered subspaces of $V(r,q^n)$, $n\geq h+3$, and MRD $[rn/(h+1),r]_{q^n/q}$ codes with right idealizer isomorphic to $\F_{q^n}$ has been established.

The primary open problem regarding maximum \( h \)-scattered subspaces in \( V(r,q^n) \) is their existence for every admissible value of \( r \), \( n \), and \( h\geq 2 \). It is now known that when \( 2 \mid rn \), scattered subspaces of maximum dimension always exist \cite{BBL2000, BGMP2015, BL2000, CSMPZ2016}.

In this paper, we provide a new infinite family of maximum $h$-scattered $\F_q$-subspaces in
$V(m(h+1),q^n)$, for any integers $h,m,n$, with $2\leq h\leq n-3$ and for any prime power $q$.

Another crucial achievement of this work is to establish limitations on some generalized weights of the codes derived from the new family of $h$-scattered subspaces. These parameters are in general better, hence providing genuinely new MRD codes.

\section{Main Result}\label{sec:construction}
In this section we present a family of maximum $h$-scattered $\F_q$-subspaces of the vector space $\F_{q^n}^{m(h+1)}$.
\bigskip

Let $n,m,h$ be positive integers with $m\geq3$, $2\leq h\leq n-2$
and $q$ be a prime power. For each choice of  $\alpha_1 ,\dots, \alpha_m\in{\mathbb {F}}_{{q^n}}^*$, let $\textbf{A}:=(\alpha_1,\dots,\alpha_m)$ and $K_{\textbf{A}}:=\alpha_1^{q^{(m-1)}}\alpha_2 \alpha_3^{q}\cdots \alpha_m^{q^{(m-2)}}$. Define the following $\F_q$-subspace of $\F_{q^n}^{m(h+1)}$:
\begin{equation}\label{set}
V_{\mathbf{A},h}:=\{(\underline{x},\underline{x}^q,\underline{x}^{q^2},\dots,\underline{x}^{q^{h-1}},f_1(\underline{x}),f_2(\underline{x}),\dots,f_{m-1}(\underline{x}),f_m(\underline{x})):\underline{x}\in\mathbb{F}^m_{q^n}\},
\end{equation}
where $f_m(\underline{x}):=x_m^{q^h}+\alpha_1x_1^{q^{h+1}}$ and $f_i(\underline{x}):=x_i^{q^h}+\alpha_{i+1}x_{i+1}^{q^{h+1}}$, with $i=1,\dots,m-1$. 


In what follows we will make use several times of the following notations. For a given $\mathbb{F}_{q^n}$-subspace $H$ of $V(m(h+1),q^n)$ of dimension $t$, consider the set of  $m(h+1)-t$ independent linear homogeneous equation in $z_1,\ldots,z_{m(h+1)}$. Let $\mathcal{\overline{S}}_{i}$, $i=1,\ldots,h+1$, be the set of such equations containing $z_1,\ldots, z_{im}$ and let  $ \overline{s}_i=|\mathcal{\overline{S}}_{i}|$. Denote by $\mathcal{{S}}_{1} = \mathcal{\overline{S}}_{1}$,  $\mathcal{{S}}_{i} = \mathcal{\overline{S}}_{i}\setminus \mathcal{\overline{S}}_{i-1}$, $i=2,\ldots,h+1$, and  $s_i:= |\mathcal{S}_i|$, $i=1,\ldots,h+1$.     By our assumption on the independence of the $t$ equations defining $H$, the matrix of the corresponding system $(S)$ can be rearranged as follows 
\begin{equation}\label{Eq:matrix}
    \begin{pmatrix}
    A_0^{(h+1)}&A_1^{(h+1)}&\cdots&A_{h-1}^{(h+1)}&A_{h}^{(h+1)}\\
    A_0^{(h)}&A_1^{(h)}&\cdots&A_{h-1}^{(h)}&0\\
    \vdots\\
    A_{0}^{(2)}&A_{1}^{(2)}&\cdots&0&0\\
    A_{0}^{(1)}&0&\cdots&0&0
\end{pmatrix},
\end{equation}
where $A_{i}^{(j)}$, $i=0,\dots,h$, $j=1,\dots,h+1$, is a matrix with $s_j$ rows and $m$ columns and each $A_{i}^{(i+1)}$, $i=0,\dots,h$, is of full rank. 

Consider now the substitution 
$$(z_1,\dots,z_{(h+1)m})\mapsto (\underline{x},\underline{x}^q,\underline{x}^{q^2},\dots,\underline{x}^{q^{h-1}},f_1(\underline{x}),f_2(\underline{x}),\dots,f_{m-1}(\underline{x}),f_m(\underline{x}))$$
and the corresponding system $(S^{\prime})$ in the variables $x_1,\ldots,x_m$. For sake of convenience, in order to bound the number of solutions of the system $(S^{\prime})$ we will consider, in some cases, such a system as a linear system in the variables $x_{i,j}=x_i^{q^j}$, $i=1,\ldots,m$, $j=0,\ldots,n-1$.

\bigskip

Now, we prove our main result.
\begin{theorem}\label{Th:2.1}
Consider the set $$\mathcal{A}:=\{\textbf{A}=(\alpha_1,\dots,\alpha_m): K_{\textbf{A}}\text{ is not a } (1+q+q^2+\cdots+q^{m-1})\text{-power}  \}.$$
 The subspace   $V_{\mathbf{A},h}$ defined in \eqref{set} is a maximum $h$-scattered of $\F_{q^n}^{m(h+1)}$ for every $\textbf{A}\in\mathcal{A}$.
\end{theorem}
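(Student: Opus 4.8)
The plan is to exploit that $V_{\mathbf{A},h}$ already has the maximal possible dimension, so that only the scattering condition needs to be checked. Since the parametrization in \eqref{set} has $\underline{x}$ itself as its first $m$ coordinates, the map $\underline{x}\mapsto (\underline{x},\underline{x}^q,\dots,f_m(\underline{x}))$ is injective and $\dim_{\F_q}V_{\mathbf{A},h}=mn=\tfrac{m(h+1)n}{h+1}$, which is exactly Bound \eqref{hscatbound} for $r=m(h+1)$. First I would check that $V_{\mathbf{A},h}$ generates the whole space and is not a subgeometry: evaluating the generators at $\underline{x}=\xi\,\mathbf e_i$ and taking $\F_{q^n}$-combinations, the $\F_{q^n}$-linear independence of the maps $\xi\mapsto \xi^{q^j}$ for $0\le j\le h+1$ (a Moore-matrix argument, valid precisely because $h\le n-2$) shows that the $\F_{q^n}$-span contains every coordinate direction; and since $mn\neq m(h+1)$ for $n\ge h+2$, it is not a subgeometry. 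Consequently it suffices to prove that every $\F_{q^n}$-subspace $H$ of dimension $h$ satisfies $\dim_{\F_q}(H\cap V_{\mathbf{A},h})\le h$, after which maximality is automatic.

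To bound the intersection I would pass, as prepared in the text, to the system $(S')$ obtained by substituting the parametrization into the $m(h+1)-h$ equations defining $H$, arranged as in \eqref{Eq:matrix}. The equations in $\mathcal S_1,\dots,\mathcal S_h$ are $\F_{q^n}$-linearized forms in the low Frobenius powers $x_i,x_i^q,\dots,x_i^{q^{h-1}}$, while those in $\mathcal S_{h+1}$ are the only ones involving the block $(f_1(\underline{x}),\dots,f_m(\underline{x}))$, and hence the coefficients $\alpha_i$ together with the top powers $x_i^{q^h}$ and $x_{i+1}^{q^{h+1}}$. Reading $(S')$ as an $\F_{q^n}$-linear system in the variables $x_{i,j}=x_i^{q^j}$ and closing it under the Frobenius map, the $\F_q$-dimension of the solution set equals $mn$ minus the rank of the resulting matrix. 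The goal is then to show that, for every admissible distribution $(s_1,\dots,s_{h+1})$ of the rows with $\sum_i s_i=m(h+1)-h$ and every choice of $H$, this rank is at least $mn-h$, so that the solution set has $\F_q$-dimension at most $h$.

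The key point is that the only way the rank can drop below this threshold is when the top-block equations force the full cyclic system $f_i(\underline{x})=0$, that is $x_i^{q^h}=-\alpha_{i+1}x_{i+1}^{q^{h+1}}$ for $i=1,\dots,m$ (indices read cyclically), and that this system admits a nonzero solution. Solving each relation for $x_{i+1}$ and composing them around the cycle $1\to 2\to\cdots\to m\to 1$ produces a single relation of the form $x_1^{q^m-1}=c\,\kappa$, where $c$ is an explicit nonzero constant and $\kappa$ is a Frobenius power of $K_{\mathbf{A}}$ or of its inverse; the exponent pattern $\alpha_1^{q^{m-1}}\alpha_2\alpha_3^{q}\cdots\alpha_m^{q^{m-2}}$ of $K_{\mathbf{A}}$ arises exactly from telescoping the factors $\alpha_{i+1}$ along the cycle. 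Since $x_1^{q^m-1}=(x_1^{q-1})^{1+q+\cdots+q^{m-1}}$ is automatically a $(1+q+\cdots+q^{m-1})$-th power, and the set of such powers is a subgroup stable under Frobenius and inversion, a nonzero $x_1$ can exist only if $K_{\mathbf{A}}$ is itself a $(1+q+\cdots+q^{m-1})$-th power. For $\mathbf{A}\in\mathcal A$ this is excluded, so the cyclic system has no nonzero solution, the rank stays at least $mn-h$, and $\dim_{\F_q}(H\cap V_{\mathbf{A},h})\le h$; hence $V_{\mathbf{A},h}$ is $h$-scattered and, by the dimension count, maximum.

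I expect the main obstacle to be the uniform rank estimate of the second step: one must control the rank of the Frobenius-closed system over all ways of splitting the defining equations among the blocks $\mathcal S_1,\dots,\mathcal S_{h+1}$ and over all subspaces $H$, separating cleanly the harmless contribution of the lower blocks from the single dangerous configuration in $\mathcal S_{h+1}$, while counting the $\F_q$-dimension correctly despite the Frobenius relations $x_{i,j+1}=x_{i,j}^q$ among the linearized variables. Once it is established that exceeding the bound forces the cyclic system above, the norm-type computation producing $K_{\mathbf{A}}$ together with the hypothesis $\mathbf{A}\in\mathcal A$ concludes; the minor sign bookkeeping coming from the factors $-\alpha_{i+1}$ (relevant only for odd $q$) is routine.
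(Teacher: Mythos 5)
Your setup (the dimension count $\dim_{\F_q}V_{\mathbf{A},h}=mn$ meeting Bound \eqref{hscatbound}, the reduction to bounding $\dim_{\F_q}(H\cap V_{\mathbf{A},h})$ by $h$, the passage to the linearized system in the variables $x_{i,j}$, and the telescoping computation that produces $K_{\mathbf{A}}$) matches the paper's framework, but the core of your argument has a genuine gap: you misidentify the critical configuration. You claim that the rank can only drop when ``the top-block equations force the full cyclic system $f_i(\underline{x})=0$'' and then analyse when that cyclic system has a nonzero solution. That is not the obstruction. The paper's pigeonhole on the row distribution ($\sum_i s_i=(h+1)m-h$ with $s_i\le m$, so some $s_k=m$) isolates the only hard case as $s_1=\cdots=s_h=m-1$, $s_{h+1}=m$. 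There the block $\mathcal{S}_1$ cuts out a line $x_i=\mu_i x_1$ with $\mu_i$ depending on $H$, and the danger is that the $m$ equations of $\mathcal{S}_{h+1}$ (which are a nonsingular combination $A_h^{(h+1)}$ of the $f_j$'s \emph{plus lower-order terms}, so they never force $f_j(\underline{x})=0$ individually), once restricted to that line, all become proportional as linearized polynomials in $x_1$ --- leaving effectively a single equation of degree $q^{h+1}$ with up to $q^{h+1}>q^h$ roots. The hypothesis on $K_{\mathbf{A}}$ is used to exclude the proportionality of the two coefficient vectors $A=(\alpha_2\mu_2^{q^{h+1}},\ldots,\alpha_1)$ and $B=(1,\mu_2^{q^h},\ldots,\mu_m^{q^h})$: the relation $A=uB$ telescopes around the cycle to $K_{\mathbf{A}}=u^{1+q+\cdots+q^{m-1}}$. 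This is a condition on the direction $(\mu_2,\ldots,\mu_m)$ of the line determined by $H$ and an auxiliary scalar $u$, not on a putative common zero of the $f_i$'s; your cyclic system $x_i^{q^h}=-\alpha_{i+1}x_{i+1}^{q^{h+1}}$ is a different object and bounding its solution set does not bound $\dim_{\F_q}(H\cap V_{\mathbf{A},h})$ for general $H$.

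A second, related gap is that you explicitly defer ``the uniform rank estimate'' over all distributions $(s_1,\ldots,s_{h+1})$ and all $H$ as the expected main obstacle --- but that estimate \emph{is} the proof. The paper resolves it concretely: when $s_k=m$ for some $k\le h$, the $m$ independent equations of $\mathcal{S}_k$ together with their Frobenius twists $(F_j)^{q^\ell}$, $\ell=0,\ldots,n-k$, and the equations of $\mathcal{S}_1\cup\cdots\cup\mathcal{S}_{k-1}$ already give at least $nm-h$ independent linear conditions on the $x_{i,j}$; only the remaining case $s_1=\cdots=s_h=m-1$, $s_{h+1}=m$ requires the arithmetic hypothesis on $K_{\mathbf{A}}$, via the linear-independence argument for $A$ and $B$ combined with the nonsingularity of $A_h^{(h+1)}$. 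Without supplying both the case analysis and the correct identification of where the norm condition enters, the proposal does not constitute a proof.
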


\begin{proof}
    Consider an $\mathbb{F}_{q^n}$-subspace $H$ of $\F_{q^n}^{m(h+1)}$ of dimension $h$ and thus defined by $m(h+1)-h$ independent linear homogeneous equation in $z_1,\ldots,z_{m(h+1)}$. 
    
    


Using the same notations as in \eqref{Eq:matrix}, our aim is to bound the number of solutions of $(S^{\prime})$ by $q^h$.


Note that each $s_i$ is at most $m$. Since $s_1+\cdots+s_{h+1}=(h+1)m-h$,  there exists at least $k\in \{1,\ldots, h+1\}$ such that $s_{k}= m$.

We distinguish two cases.

\begin{enumerate}
\item $s_{k}=m$ for some $k \in \{1,\ldots,h\}$.

Note that $k=1$ yields  $(x_1,\ldots,x_m)=(0,\ldots,0)$ and the claim follows.

Consider now $k>1$. Let us consider $x_{i,j}=x_i^{q^j}$, $i=1,\ldots,m$, $j=0,\ldots,n-1$, be independent variables. Each equation of System $(S^\prime)$ can be seen as a linear equation in the variables $x_{i,j}$'s. Let $\mathcal{S}_k=\{F_1,\ldots,F_m\}$ be $s_k=m$ independent equations.  
 The equations $(F_1)^{q^{\ell}},\ldots,(F_m)^{q^\ell}$, $\ell=0,\ldots,n-k$, together with the  $s_1+\cdots+s_{k-1}$ equations are independent. They form a system of at least 
 $$(s_1+\cdots+s_{k-1}) +(n-k+1)m\geq (h+1)m-h-(h+2-k)m +(n-k+1)m=nm-h$$
 independent equations and thus the number of solutions is bounded by $q^h$ solutions.
 
\item $s_{h+1}=m$ and $s_1=\cdots=s_{h}=m-1$. 
Since $s_1=m-1$, without loss of generality we can suppose that for each $i\geq 2$, $x_i=\mu_i x_1$, for some $\mu_i \in \mathbb{F}_{q^n}$. Let $A=(\alpha_2 \mu_2^{q^{h+1}},\alpha_3 \mu_3^{q^{h+1}},\ldots,\alpha_m \mu_m^{q^{h+1}},\alpha_1)$ and $B=(1,\mu_2^{q^h},\ldots,\mu_{m-1}^{q^h},\mu_{m}^{q^h})$ and consider  the $m$ equations in $\mathcal{S}_{h+1}$. They read
$$
\begin{cases}
((a_{1,1},\ldots,a_{1,m})\cdot A ) x_1^{q^{h+1}}+((a_{1,1},\ldots,a_{1,m})\cdot B ) x_1^{q^{h}}+\cdots=0\\
\vdots\\
((a_{m,1},\ldots,a_{m,m})\cdot A ) x_1^{q^{{h+1}}}+((a_{m,1},\ldots,a_{m,m})\cdot B ) x_1^{q^{h}}+\cdots=0,
\end{cases}
$$
where $(a_{i,j})_{i,j}$ is the nonsingular $m\times m$ matrix $A_{h}^{(h+1)}$. First, note that not all these equations are vanishing, since the coefficients of $x_1^{q^{h}}$ cannot vanish simultaneously, given the nonsingularity of $A_{h}^{(h+1)}$ and $B\neq (0,0,\ldots,0)$. 


We show now that $A$ and $B$ are linearly independent. Suppose by way of contradiction that $A=uB$ for some $u \in \mathbb{F}_{q^n}$, $u \neq 0$.
In particular 
$$
\begin{cases}
    \alpha_2\mu_2^{q^{h+1}}=u,\\ \alpha_3\mu_3^{q^{h+1}}=u \mu_{2}^{q^h},\\
    \vdots\\
    \alpha_{m}\mu_m^{q^{h+1}}=u \mu_{m-1}^{q^h},\\
    \alpha_1=u \mu_{m}^{q^h}
\end{cases}$$
and thus 
$$
\begin{cases}
    \alpha_2\mu_2^{q^{h+1}}=u,\\ \alpha_3^{q}\mu_3^{q^{h+2}}=u^{q} \mu_{2}^{q^{h+1}},\\
    \vdots\\
    \alpha_{m}^{q^{(m-2)}}\mu_m^{q^{h+m-1}}=u^{q^{(m-2)}} \mu_{m-1}^{q^{h+m-2}},\\
    \alpha_1^{q^{m-1}}=u^{q^{m-1}} \mu_{m}^{q^{h+m-1}}.
\end{cases}$$
This yields
$$\alpha_1^{q^{(m-1)}}=u^{q^{(m-1)}}\cdots u^{q}u/(\alpha_2 \alpha_3^{q}\cdots \alpha_m^{q^{(m-2)}}),$$

a contradiction to $K_\textbf{A}$ not being a $(1+q+\cdots q^{m-1})$-power in $\mathbb{F}_{q^n}$.

Thus, $A$ and $B$ are linearly independent. 

Suppose without loss of generality that the first equation is not vanishing and let $\lambda_i$ be such that the $i$-th equation is $\lambda_i$ times the first one. This means that 
\begin{equation}\label{Eq1}
\lambda_i(a_{1,1},\ldots,a_{1,m})\cdot A =(a_{i,1},\ldots,a_{i,m})\cdot A \end{equation}
and 
\begin{equation}\label{Eq2}
\lambda_i(a_{1,1},\ldots,a_{1,m})\cdot B =(a_{i,1},\ldots,a_{i,m})\cdot B.\end{equation}

Now Equations \eqref{Eq1} and \eqref{Eq2} are equivalent to $\lambda_i(a_{1,1},\ldots,a_{1,m})-(a_{i,1},\ldots,a_{i,m})$ belonging to $$Z:= \{(z_1,\ldots,z_m) : B\cdot (z_1,\ldots,z_m) =A\cdot (z_1,\ldots,z_m)=(0,0,\ldots,0)\}.$$ Since $A$ and $B$ are linearly independent, $Z$ has dimension $m-2$.  Thus 
$$(a_{i,1},\ldots,a_{i,m})\in Z \oplus \langle(a_{1,1},\ldots,a_{1,m})\rangle,$$
a contradiction to $A_{h}^{(h+1)}$ being non-singular.


\end{enumerate}
\end{proof}

\section{The equivalence issue}

In this section we employ the same notations established in Section \ref{sec:construction}.  
As noted in the introduction, there exist maximum $h$-scattered $\F_q$-subspace in $V(m(h+1),q^n)$ that are direct sum of maximum $h$-scattered $\F_q$-subaspaces in $V(h,q^n)$; see  \cite[Example 2.5 and Theorem 2.6]{CsMPZ2019}. Revisiting the concepts of the evasiveness property and the indecomposability of an $\F_q$-subspace is necessary to prove that  the family introduced in the previous section is new. 

\begin{definition}\rm{(\cite{BaCsMT2020})}
Let $h,r,k,n$ be positive integers such that $h<k$ and $h \le r$. An $\mathbb{F}_q$-subspace $U\subseteq V(k,q^n)$ is said to be \emph{$(h,r)$-evasive} if for every $h$-dimensional $\field$-subspace  $H\subseteq V(k,q^n)$, it holds $\dim_{\mathbb{F}_q}(U\cap H)\leq r$. When $h=r$, an $(h,h)$-evasive subspace is $h$-scattered. Furthermore, when $h=1$,  a $1$-scattered subspace is a scattered subspace.
\end{definition}

\begin{definition}\rm{(cf. \cite[Def. 3.3]{BMNV2022})}
 An $\mathbb{F}_q$-subspace $U$ of $V(k,q^n)$   is said to be \emph{decomposable} if it can be written as a direct sum of two or more $\F_q$-subspaces  which are called \emph{factors} of $U$. Furthermore, 
 $U$ is said to be \emph{indecomposable} if it is not decomposable.
\end{definition}

\begin{theorem}\label{hug1}
     The subspace $V_{\mathbf{A},1}$ is $(2,4)$-evasive for every $\textbf{A}\in\mathcal{A}$.
\end{theorem}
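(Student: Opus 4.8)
The plan is to prove that $V_{\mathbf{A},1}$ is $(2,4)$-evasive by bounding, for an arbitrary $2$-dimensional $\mathbb{F}_{q^n}$-subspace $H$ of $\mathbb{F}_{q^n}^{2m}$, the number of $\underline{x}\in\mathbb{F}_{q^n}^m$ whose image $(\underline{x},f_1(\underline{x}),\dots,f_m(\underline{x}))$ lies in $H$, showing it is at most $q^4$. Here $h=1$, so the ambient space is $V(2m,q^n)$ and the defining map records only $\underline{x}$ together with the $m$ functions $f_i(\underline{x})=x_i^{q}+\alpha_{i+1}x_{i+1}^{q^{2}}$ (indices cyclic, with $f_m(\underline{x})=x_m^q+\alpha_1 x_1^{q^2}$). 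I would reuse the machinery of the proof of Theorem~\ref{Th:2.1}: $H$ is cut out by $2m-2$ independent linear homogeneous equations, and after the substitution $(z_1,\dots,z_{2m})\mapsto(\underline{x},f_1,\dots,f_m)$ I obtain the system $(S')$, whose solution count I must bound.

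First I would set up the block matrix \eqref{Eq:matrix}, which for $h=1$ has just two row-blocks indexed by $\mathcal{S}_1$ and $\mathcal{S}_2$ with $s_1+s_2=2m-2$. The key combinatorial observation is that now the two sizes $s_1,s_2\in\{0,1,\dots,m\}$ must sum to $2m-2$, so the possible profiles are limited: either some $s_k=m$ (the top block $\mathcal{S}_1$ or the bottom block $\mathcal{S}_2$ is full), or both are deficient in a controlled way, namely $(s_1,s_2)\in\{(m-1,m-1),(m-2,m),(m,m-2)\}$. I would organize the argument by these profiles, in analogy with the two-case split in Theorem~\ref{Th:2.1}, but tracking the weaker target $q^4$ rather than $q^h=q$. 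In the profiles where $s_1=m$ the equations $\mathcal{S}_1$ already force $\underline{x}=\underline{0}$; where $s_2=m$ but $s_1=m-2$, the $s_1$ equations confine $\underline{x}$ to a $2$-dimensional $\mathbb{F}_{q^n}$-space and the Frobenius-conjugate-equation trick (applying $q^\ell$-powers to the full block $\mathcal{S}_2$ and counting independent linear conditions on the $x_{i,j}=x_i^{q^j}$) drives the count down.

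The delicate case is the balanced profile $s_1=s_2=m-1$. Since $s_1=m-1$, the top block $A_0^{(1)}$ has corank $1$, so its kernel is one $\mathbb{F}_{q^n}$-line; writing $x_i=\mu_i x_1$ for $i\ge2$ reduces $\mathcal{S}_2$ to a system in the single variable $x_1$ together with its Frobenius powers. I would then adapt the vectors $A=(\alpha_2\mu_2^{q^{2}},\dots,\alpha_m\mu_m^{q^{2}},\alpha_1)$ and $B=(1,\mu_2^{q},\dots,\mu_m^{q})$ from the Theorem~\ref{Th:2.1} proof (specialized to $h=1$) and invoke the same $K_{\mathbf{A}}$-argument: the hypothesis $\mathbf{A}\in\mathcal{A}$ forces $A,B$ to be $\mathbb{F}_{q^n}$-linearly independent, so the space $Z$ of common annihilators has dimension $m-2$, and the non-singularity of $A_0^{(2)}$ again yields a contradiction unless the relevant equations collapse to a bounded number of independent $\mathbb{F}_q$-linear conditions on $x_1,x_1^q,x_1^{q^2}$. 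The main obstacle is precisely the bookkeeping here: I expect that in this balanced profile the residual system in $x_1$ (and its first few conjugates) can have up to a $4$-dimensional $\mathbb{F}_q$-solution space, which is exactly why the evasiveness bound is $4$ and not smaller. I would therefore concentrate the careful counting on showing this residual system has at most $q^4$ solutions, confirming $\dim_{\mathbb{F}_q}(V_{\mathbf{A},1}\cap H)\le 4$ and hence $(2,4)$-evasiveness.
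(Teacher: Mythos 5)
Your overall strategy --- cutting out $H$ by $2m-2$ equations, substituting, and analyzing the profiles $(s_1,s_2)\in\{(m,m-2),(m-1,m-1),(m-2,m)\}$ --- is a genuinely different route from the paper's, which instead fixes two $\mathbb{F}_q$-independent vectors of $V_{\mathbf{A},1}\cap H$, forms the $3\times 2m$ matrix $M$, and bounds the number of third rows $\underline{z}$ giving $\mathrm{rank}(M)=2$ via vanishing $3\times 3$ minors. But your proposal has a genuine gap, and you have located the difficulty in the wrong case. In the balanced profile $s_1=s_2=m-1$ the argument you sketch closes easily and yields the bound $q^2$, not $q^4$: after writing $x_i=\mu_ix_1$, each of the $m-1$ equations of $\mathcal{S}_2$ becomes $c_ix_1+d_ix_1^q+e_ix_1^{q^2}=0$ with $d_i=R_i\cdot B$ and $e_i=R_i\cdot A$, where $R_i$ is the $i$-th row of the full-rank $(m-1)\times m$ matrix $A_1^{(2)}$; if all these equations vanished identically, the $m-1$ independent rows $R_i$ would lie in the $(m-2)$-dimensional space $A^\perp\cap B^\perp$ (here $\mathbf{A}\in\mathcal{A}$ gives the independence of $A$ and $B$), a contradiction, so at least one is a nonzero $q$-polynomial of degree at most $q^2$ in $x_1$.

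The crux is the profile $(s_1,s_2)=(m-2,m)$, which you dismiss with ``the Frobenius-conjugate-equation trick drives the count down.'' It cannot, as stated. If the conjugates $(F_i)^{q^\ell}$, $\ell=0,\dots,n-2$, together with $\mathcal{S}_1$ were always independent, you would get $nm-2$ independent conditions and conclude that $V_{\mathbf{A},1}$ is $(2,2)$-evasive, i.e.\ $2$-scattered; this is impossible since $\dim_{\mathbb{F}_q}V_{\mathbf{A},1}=mn$ exceeds the bound $2mn/3$ of \eqref{hscatbound}. Hence the independence must fail for some $H$, and indeed any $H$ with $\dim_{\mathbb{F}_q}(V_{\mathbf{A},1}\cap H)\geq 3$ necessarily lies in this profile. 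Avoiding the wrap-around by taking only $\ell=0,\dots,n-3$ restores independence but yields merely $(n-2)m+(m-2)=nm-m-2$ conditions, i.e.\ the useless bound $q^{m+2}$. What is needed here is a further idea: after eliminating $m-2$ variables via $\mathcal{S}_1$ you are left with equations in two variables of $q$-degree at most $2$, and you must bound their common zeros by $q^4$. This is exactly what the paper's Case 1 supplies, producing two plane curves $\chi_1,\chi_2$ of degree $q^2$ with distinct points at infinity (hence no common component) and applying Bézout's theorem; the exponent $4=2\cdot 2$ in the statement comes precisely from this step. Without this, or an equivalent argument, your proof is incomplete.
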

\begin{proof}
     Fix $x_1,\dots,x_m,y_1,\dots,y_m\in\field$ such that the first two rows of the following matrix are independent
  $$M:=\begin{pmatrix}
        x_1&\cdots&x_m&x_1^{q}+\alpha_2x_2^{q^2}&\cdots&x_m^{q}+\alpha_1x_1^{q^2}\\
        y_1&\cdots&y_m&y_1^{q}+\alpha_2y_2^{q^2}&\cdots&y_m^{q}+\alpha_1y_1^{q^2}\\

        z_1&\cdots&z_m&z_1^{q}+\alpha_2z_2^{q^2}&\cdots&z_m^{q}+\alpha_1z_1^{q^2}
    \end{pmatrix}.
    $$
We want to show that this matrix has rank two for a maximum of $q^{4}$ $m$-tuples $\underline{z}$.
Suppose by way of contradiction that $M$ has rank $2$. We divide the proof in two cases.

\begin{enumerate}
    \item There exists $i<j\in \{1,\ldots,m\}$ such that $K:=x_iy_j-x_{j}y_i\neq 0$. 
    
    Without loss of generality, we can suppose $i=1$.

   We consider all the submatrices formed by the columns $C_1,C_j,C_\ell$ with $\ell=1,\dots,2m$.
 Since $M$ is of rank 2, all the determinants $K_\ell$ of the $3\times 3$ matrices formed by $C_1,C_j,C_\ell$, $\ell=1,\ldots,2m$, vanish.  From $K_\ell=0$ with $\ell=2,\dots,m$, $\ell\ne j$, we have $z_\ell=g_\ell(z_1,z_j)$, $\ell=2,\dots,m$, $\ell\neq j$. Obviously, we can consider $g_1(z_1,z_j)=z_1,g_j(z_1,z_j)=z_j$. Also, $\deg(g_\ell)=1$. 
 
{Considering $\ell=m+j-1$ and $\ell=2m$ we get }
\begin{equation*}
    \begin{cases}
        K(g_{j-1}(z_1,z_j)^{q}+\alpha_jz^{q^2}_j)+A_1z_1+A_2z_j=0\\
        K(g_m(z_1,z_j)^q+\alpha_1z_1^{q^2})+B_1z_1+B_2z_j=0,
    \end{cases}
\end{equation*}
where $g_m(z_1,z_j)=D_1z_1+D_2z_j$, $g_{j-1}(z_1,z_2)=E_1z_1+E_2z_j$ . 

 Notice that {the two equations above define two plane curves }
\begin{eqnarray*}
\chi_1&:&K(E^q_1z^q_1+E^q_2z^q_j+\alpha_jz^{q^2}_j)+A_1z_1+A_2z_j=0, \\
 \chi_2&:&K(D_1^qz_1^q+D_2^qz_j^q+\alpha_3z_1^{q^2})+B_1z_1+B_2z_j=0, \end{eqnarray*}

with affine coordinates $(z_1,z_j)$. We can estimate the number of solutions of the previous system by estimating the number of intersections of such curves. Our aim is to use Bézout's theorem, so we need to check the existence of common components. Obviously, if they had a common component, they would also have a common point at infinity. Since \mbox{$\overline{\chi_1}\cap r=\{[1:0:0]\}$} and $\overline{\chi_2}\cap r=\{[0:1:0]\}$, where $r$ is the line at infinity,{ we deduce that this is not the case.} So we can apply Bézout's theorem on $\chi_1$ and $\chi_2${ and bound } the number of solutions of our system {by } $q^{4}$.

\item $\underline{y}=\lambda\underline{x}$.

By symmetry, we can consider $x_1\neq 0$.
We have  $\lambda\notin\mathbb{F}_q$, otherwise the first two rows would not be $\mathbb{F}_q$-independent. Since the third row is a linear combination of the first two, in particular $\underline{z}=\mu\underline{x}$, for some $\mu \in \mathbb{F}_{q^n}$, and thus 
    $$M=\begin{pmatrix}
    x_1&\cdots&x_m&x_1^{q}+\alpha_2x_2^{q^2}&\cdots&x_m^{q}+\alpha_1x_1^{q^2}\\
        \lambda x_1&\cdots&\lambda x_m&\lambda^{q} x_1^{q}+\alpha_2\lambda^{q^2} x_2^{q^2}&\cdots&\lambda^{q} x_m^{q}+\alpha_1\lambda^{q^2} x_1^{q^2}\\
        \mu x_1&\cdots&\mu x_m&\mu^{q} x_1^{q}+\alpha_2\mu^{q^2} x_2^{q^2}&\cdots&\mu^{q} x_m^{q}+\alpha_1\mu^{q^2} x_1^{q^2}
    \end{pmatrix}.$$

Consider the $3\times 3$ submatrices formed by the columns $C_1,C_{m+1}$ and $C_\ell$ with $\ell\in[m+2,\dots,2m]$. 
\begin{eqnarray*}
    &&\mu x_1 \begin{vmatrix}
        x_1^{q}+\alpha_2x_2^{q^2}&x_i^{q}+\alpha_{i+1}x_{i+1}^{q^2}\\
        \lambda^{q} x_1^{q}+\alpha_2\lambda^{q^2} x_2^{q^2}&\lambda^{q} x_i^{q}+\alpha_{i+1}\lambda^{q^2} x_{i+1}^{q^2}
    \end{vmatrix}+A_i\mu^{q}+B_i\mu^{q^2}=0, \text{ with } i=2,\dots,m-1,\\
     &&\mu x_1 \begin{vmatrix}
        x_1^{q}+\alpha_2x_2^{q^2}&x_m^{q}+\alpha_{1}x_{1}^{q^2}\\
        \lambda^{q} x_1^{q}+\alpha_2\lambda^{q^2} x_2^{q^2}&\lambda^{q} x_m^{q}+\alpha_{1}\lambda^{q^2} x_{1}^{q^2}
    \end{vmatrix}+A_m\mu^{q}+B_m\mu^{q^2}=0,
\end{eqnarray*}
for some $A_i,B_i\in \mathbb{F}_{q^n}$, $i=1,\ldots,m$.
The coefficients of $\mu$ are 
\begin{eqnarray*}
   && x_1(\lambda^{q}-\lambda^{q^2})(\alpha_2x_2^{q^2} x_i^{q^2}-\alpha_{i+1}x_{i+1}^{q^2} x_1^{q})\text{,  with } i=2,\dots,m-1,\\
 && x_1(\lambda^{q}-\lambda^{q^2})(\alpha_2x_2^{q^2} x_m^{q^2}-\alpha_{1}x_{1}^{q^2+q} )  .
\end{eqnarray*}

If at least one of the coefficients above is non-vanishing, at least one of the equations in $\mu$ is non-vanishing and thus there are at most $q^2$ possibilities for $\mu$, corresponding to at most $q^2$ distinct vectors $\underline{z}$. Suppose now that they all vanish. We obtain
\begin{align*}
    x_1^{q}\alpha_ix_i^{q^2}-x_{i-1}^{q}\alpha_2x_2^{q^2}=0,&\hspace{8 mm} i=3,\dots,m,\\
    x_1^{q+q^2}\alpha_1-\alpha_2x_m^{q} x_2^{q^2}=0.\phantom{...}&
\end{align*}
Letting $y_i:=x_i^{q}/x_1^{q} $ for $ i=2,\dots,m$ and dividing each equation  by $x_1^{q+q^2}$ we get
\begin{align*}
   (E_i)\hspace{20mm}\,\,\, \alpha_iy_i^q-\alpha_2 y_{i-1}y_2^q=0,\hspace{8 mm} i=3,\dots,m,&\phantom{ffffffffffff}\,\,\,\\
    (E)\hspace{23mm}\alpha_1-\alpha_2y_m y_2^q=0.\phantom{......rrrrrrrrrrrr.rrr}
\end{align*}
From $(E_m)$ we obtain $$y_{m-1}=\frac{\alpha_m}{\alpha_2}\frac{y_m^q}{y_2^q}.$$
Substituting \(y_{m - 1}\) in \((E_{m-1})\), we obtain 
$$y_{m-2}=\frac{\alpha_{m-1}\alpha_m^q}{\alpha_2^{q+1}}\frac{y_m^{q^{2}}}{y_2^{q^{2}+q}}.$$
We can iterate this procedure and get
\begin{equation}\label{eqlunga}
    y_{m-(m-2)}=y_2=\frac{\alpha_3\cdot\alpha_4^q\dots\alpha_m^{q^{(m-3)}}}{\alpha_2^{1+q+\dots+q^{m-3}}}\frac{y_m^{q^{(m-2)}}}{y_2^{q+\dots+q^{m-2}}}.
    \end{equation}
From $(E)$ we obtain $y_m=\alpha_1/(\alpha_2y_2^q)$ and substituting in (\ref{eqlunga}) we get
\begin{eqnarray*}
    y_2&=&\frac{\alpha_3\cdot\alpha_4^q\dots\alpha_m^{q^{(m-3)}}}{\alpha_2^{1+q+\dots+q^{m-3}}}\cdot \frac{\alpha_1^{q^{(m-2)}}}{\alpha_2^{q^{(m-2)}}}\cdot \frac{1}{y_2^{q+\dots+q^{m-2}}\cdot y_2^{q^{(m-1)}}}\\
    &=&\frac{\alpha_3\cdot\alpha_4^q\dots\alpha_m^{q^{(m-3)}}\cdot \alpha_1^{q^{(m-2)}}}{\alpha_2^{1+q+\dots+q^{m-2}}}\cdot \frac{1}{y_2^{q+\dots+q^{m-1}}}.
\end{eqnarray*}
Thus $$y_2^{1+q+\dots+q^{m-1}}=\left(\frac{K_{\textbf{A}}}{\alpha_2^{1+q+\dots+q^{m-1}}}\right)^{q^{-1}},$$ a contradiction to our hypothesis. 
\end{enumerate}
\end{proof}

\begin{theorem}\label{hug2}
The subspace     $V_{\mathbf{A},2}$ is $(3,4)$-evasive for every $\textbf{A}\in\mathcal{A}$.
\end{theorem}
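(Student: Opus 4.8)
The plan is to run the linear-algebra and Frobenius-twist scheme already set up around \eqref{Eq:matrix}, now with $h=2$ and a $3$-dimensional subspace $H$. Such an $H$ is cut out by $3m-3$ independent linear forms, which I arrange as in \eqref{Eq:matrix}, and the goal becomes to bound the number of solutions of the associated system $(S^\prime)$ by $q^{4}$, which is exactly $\dim_{\F_q}(V_{\mathbf{A},2}\cap H)\le 4$. Equivalently, viewing each equation of $(S^\prime)$ as a linear form in the $nm$ variables $x_{i,j}=x_i^{q^j}$, I want to exhibit $nm-4$ independent such forms. The recurring device is that a family of twists $(F)^{q^{\ell}}$ whose leading $q$-exponents lie at distinct residues and whose leading coefficient blocks have full rank is automatically independent. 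Here $s_1+s_2+s_3=3m-3$ with each $s_i\le m$, and I split according to the $s_i$.

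First, the easy case $s_k=m$ for some $k\in\{1,2\}$. If $s_1=m$ then $A_0^{(1)}$ is nonsingular and $\underline{x}=0$. If $s_2=m$, the $m$ equations of $\mathcal{S}_2$ have the full-rank leading block $A_1^{(2)}$ at exponent $1$; twisting them by $\ell=0,\dots,n-2$ gives $m$ independent forms at each leading exponent $1,\dots,n-1$, and adjoining the $s_1$ forms of $\mathcal{S}_1$ at exponent $0$ yields at least $(n-1)m+s_1\ge nm-3$ independent forms, hence at most $q^{3}$ solutions.

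Next, the case $s_1,s_2\le m-1$ and $s_1=m-2$, which forces $s_2=m-1$, $s_3=m$. Now $\mathcal{S}_3$ has full-rank leading block at exponent $3$ (namely $A_2^{(3)}$ followed by the invertible shift-and-scale by the $\alpha_i$ coming from $f$); twisting $\mathcal{S}_3$ by $\ell=0,\dots,n-4$ fills the leading exponents $3,\dots,n-1$ with $m$ independent forms each, i.e.\ $(n-3)m$ forms. Adjoining $\mathcal{S}_1$ at exponent $0$ ($m-2$ forms), $\mathcal{S}_2$ at exponent $1$ ($m-1$ forms) and the single twist $(\mathcal{S}_2)^{q}$ at exponent $2$ ($m-1$ forms)—all at leading exponents disjoint from $\{3,\dots,n-1\}$ and from one another—produces exactly $(n-3)m+(m-2)+(m-1)+(m-1)=nm-4$ independent forms, hence at most $q^{4}$ solutions. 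This is the configuration responsible for the bound $4$.

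The last, and hardest, case is $s_1=m-1$, where $\mathcal{S}_1$ forces $\underline{x}=x_1\underline{\mu}$ with $\underline{\mu}=(1,\mu_2,\dots,\mu_m)$, so that every equation of $\mathcal{S}_2\cup\mathcal{S}_3$ collapses to a single $\F_q$-linearized polynomial in $x_1$ of $q$-degree at most $3$; if any of them is nonzero, the solution set has dimension at most $3$. When $s_3=m$ this is automatic, since the coefficient of $x_1^{q^2}$ is $A_2^{(3)}\underline{\mu}^{q^{2}}\ne 0$ ($A_2^{(3)}$ being nonsingular). The genuine obstacle is the symmetric subcase $s_2=s_3=m-1$, in which $A_1^{(2)}$ and $A_2^{(3)}$ have one-dimensional kernels and, a priori, all of $\mathcal{S}_2\cup\mathcal{S}_3$ could vanish on the line $x_1\underline{\mu}$. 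I expect this to be the crux, and I would resolve it exactly as in Case 2 of Theorem \ref{hug1}: the simultaneous vanishing of all these coefficients forces a cyclic system $\alpha_{i+1}\mu_{i+1}^{q^{3}}=\theta\,\mu_i^{q^{2}}$ (indices mod $m$), and chaining these relations around the cycle—the elimination producing \eqref{eqlunga}—yields an identity forcing $K_{\mathbf{A}}$ to be a $(1+q+\cdots+q^{m-1})$-power, contradicting $\mathbf{A}\in\mathcal{A}$. Hence the degenerate branch cannot occur, some equation is nonzero, and this case again gives at most $q^{3}$ solutions. Collecting the four cases yields $\dim_{\F_q}(V_{\mathbf{A},2}\cap H)\le 4$, as required.
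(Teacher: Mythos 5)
Your proposal is correct and, for three of the four configurations of $(s_1,s_2,s_3)$, coincides with the paper's proof: the cases $s_k=m$ with $k\in\{1,2\}$ and the case $s_1=m-2$, $s_2=m-1$, $s_3=m$ are handled by exactly the same Frobenius-twist counts ($nm-3$, resp.\ $nm-4$, independent forms in the variables $x_{i,j}$), and the subcase $s_1=m-1$, $s_3=m$ by the same observation that $A_2^{(3)}\underline{\mu}^{q^2}\neq 0$. Where you genuinely diverge is the degenerate case $s_1=s_2=s_3=m-1$. The paper first disposes of the situation where some $A_0^{(k)}$ ($k=2,3$) does not annihilate $\underline{\mu}$, then reduces the residual system (raised to the power $q^{n-1}$) to the intersection of a $2$-dimensional subspace with $V_{\mathbf{A}^{q^{n-1}},1}$ and invokes the $(2,4)$-evasiveness of Theorem \ref{hug1}, obtaining the bound $q^4$. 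You instead keep the constraint $\underline{x}=x_1\underline{\mu}$ and observe that if every equation of $\mathcal{S}_3$ vanished identically on this line, then the coefficient vectors of $x_1^{q^2}$ and $x_1^{q^3}$, namely $A_2^{(3)}B$ and $A_2^{(3)}A$ with $B=(1,\mu_2^{q^2},\dots,\mu_m^{q^2})$ and $A=(\alpha_2\mu_2^{q^3},\dots,\alpha_m\mu_m^{q^3},\alpha_1)$, would both vanish; since $A_2^{(3)}$ has rank $m-1$ and $B\neq 0$, this forces $A=uB$ with $u\neq 0$ (as $A\neq 0$, its last entry being $\alpha_1$), and chaining the resulting cyclic relations gives $K_{\mathbf{A}}=u^{1+q+\cdots+q^{m-1}}$, contradicting $\mathbf{A}\in\mathcal{A}$. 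Hence some restricted equation is a nonzero linearized polynomial of $q$-degree at most $3$, and this case contributes at most $q^3$ solutions. This argument is valid; note only that it is the computation of Case 2 in the proof of Theorem \ref{Th:2.1}, not of Case 2 of Theorem \ref{hug1} as you cite (the elimination producing \eqref{eqlunga} is an analogous but different chain). Your route buys two things: the proof of this theorem no longer depends on Theorem \ref{hug1} and its B\'ezout argument, and the degenerate case yields the sharper bound $q^3$, so that the value $4$ arises only from the configuration $s_1=m-2$. The paper's detour through Theorem \ref{hug1} is what sets up the induction pattern later used in Theorem \ref{thm:evas}, but for the statement at hand your argument is complete.
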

\begin{proof}
    
Consider an $\mathbb{F}_{q^n}$-subspace $H$ of $\F_{q^n}^{3m}$ of dimension $3$ and thus defined by $3(m-1)$ independent linear homogeneous equation in $z_1,\ldots,z_{3m}$. 
    

    
In this case, System \eqref{Eq:matrix} reads
$$
    \begin{pmatrix}
    A_0^{(3)}&A_1^{(3)}&A_{2}^{(3)}\\
    A_{0}^{(2)}&A_{1}^{(2)}&0\\
    A_{0}^{(1)}&0&0
\end{pmatrix},
$$
where each $A_{j}^{(j+1)}$, $j=0,1,2$, is of full rank. 


Our aim is to bound the number of solutions of $(S^{\prime})$ by $q^{4}$.

We distinguish a number of cases.

\begin{enumerate}
\item $s_{k}=m$ with $k \in \{1,2\}$.

Note that $k=1$ yields  $(x_1,\ldots,x_m)=(0,\ldots,0)$ and the claim follows.

Consider now $k=2$. Consider the $s_2=m$ independent equations $\mathcal{S}_2=\{F_1,\ldots,F_m\}$.  
 The equations $(F_1)^{q^{\ell}},\ldots,(F_m)^{q^\ell}$, $\ell=0,\ldots,n-2$, together with the  $s_1$ equations in $\mathcal{S}_1$ are independent in the variables $x_{i,j}$. They form a system of at least 
 $$s_1 +(n-1)m\geq 3(m-1)-2m +(n-1)m=nm-m+3m-3-2m=nm-3$$
 independent equations and thus the number of solutions is bounded by  $q^{3}$ solutions.
 
 \item $s_{3}=m, s_1,s_2\leq m-1$

If $s_1=m-1$, without loss of generality we can suppose that there exist $\mu_i\in \mathbb{F}_{q^n}$, $i=2,\ldots,m$, such that
$$(x_1,\ldots,x_m)=(x_1,\mu_2 x_1,\ldots,\mu_m x_1).$$
Substituting it in the $s_{3}=m$ equations in $\mathcal{S}_3$ we notice that there exists at least one equation with non-vanishing coefficient of $x_1^{q^J}$. In fact, such coefficients are $A_2^{(3)}(1,\mu_2^{q^J} ,\ldots,\mu_m^{q^J})^T$ and this cannot be the $0$-vector, being $A_2^{(3)}$ non-singular. Thus there will be at least one non-zero polynomial in $x_1$, of degree $q^J=q^{3}$. So we have at most $q^{3}$ solutions.

If $s_1=m-2$, then $s_2=m-1$. Let $F_1,\dots,F_m$ be the $s_{3}=m$ independent equations in $\mathcal{S}_3$ and $G_1,\dots,G_{m-1}$  be the $s_2=m-1$ independent equations in $\mathcal{S}_2$. The equations $(F_1)^{q^{\ell}},\ldots,(F_m)^{q^\ell}$, $\ell=0,\ldots,n-4$, together with  $(G_1)^{q^{t}},\ldots,(G_{m-1})^{q^t}$, $t=0,1$ and the $s_1$ equations in $\mathcal{S}_1$ are independent. They form a system of 
 $$m(n-3)+(m-1)2+m-2=nm-3m+2m-2+m-2=mn-4$$
 independent equations and thus the number of solutions of $(S^\prime)$ is bounded by  $q^{4}$.

\item $s_1=s_2=s_{3}=m-1$. 

Considering only the $s_1=m-1$ linear equations in $\mathcal{S}_1$, without loss of generality we can suppose that there exist $\mu_i\in \mathbb{F}_{q^n}$, $i=2,\ldots,m$, such that
$$(x_1,\ldots,x_m)=(x_1,\mu_2 x_1,\ldots,\mu_m x_1).$$

If for $k=2$ or $k=3$ the matrix $A^{(k)}_0$ is not proportional to $A^{(1)}_0$, when considering $x_i=\mu_ix_1$ in the $s_k$ equations belonging to $\mathcal{S}_{k}$, at least one of them is nonvanishing (as the equation in $x_1$) and thus the solutions of $(S^{\prime})$ are at most $q^{3}$. 

We are left we the case $A^{(i)}_0=\gamma_iA^{(1)}_0$ for each $i=2,3$, with $\gamma_i \in \mathbb{F}_{q^n}$. Thus, we can reduce the system and consider without loss of generality $A^{(2)}_0=A^{(3)}_0=0$. 

Considering the equations $s_2+s_3=2(m-1)$ in $\mathcal{S}_2 \cup \mathcal{S}_{3}$ raised to the power $q^{n-1}$, we want to prove that these equations have at most $q^{4}$ solutions. Let $H'$ be the $\mathbb{F}_{q^n}$-subspace of $\F_{q^n}^{2m}$ of dimension $2$ defined by 
$$
    \begin{pmatrix}
    A_1^{(3)}&A_{2}^{(3)}\\
    A_{1}^{(2)}&0
\end{pmatrix}^{q^{n-1}}\begin{pmatrix}
    z_1\\z_2\\\vdots\\z_{2m}
\end{pmatrix}=0,
$$
which now is a subsystem of $(S)$.

The size of $H'\cap V_{\textbf{A}^{q^{n-1}},1}$, which is by Theorem \ref{hug1} at most $q^{4}$, provides the desired bound for the number of solutions of $(S^{\prime})$.
\end{enumerate}
\end{proof}

\begin{theorem}\label{thm:evas}
       The subspace $V_{\mathbf{A},h}$ is $(h+1,h+2)$-evasive for every $\textbf{A}\in\mathcal{A}$ with $h\geq2$.
\end{theorem}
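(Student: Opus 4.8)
The plan is to argue by induction on $h\geq 2$, taking Theorem \ref{hug2} as the base case $h=2$ and, in the inductive step, assuming that $V_{\mathbf{A}',h-1}$ is $(h,h+1)$-evasive for every $\mathbf{A}'\in\mathcal{A}$. Fix an $(h+1)$-dimensional $\mathbb{F}_{q^n}$-subspace $H$, defined by $(h+1)(m-1)$ independent linear equations arranged as in \eqref{Eq:matrix}, and set $d_i:=m-s_i\geq0$, so that the independence forces $\sum_{i=1}^{h+1}d_i=h+1$. I want to bound the number of solutions $\underline{x}\in\mathbb{F}_{q^n}^m$ of the induced system $(S')$ by $q^{h+2}$, which is the same as $\dim_{\mathbb{F}_q}(V_{\mathbf{A},h}\cap H)\leq h+2$. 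A short count shows the following three cases are exhaustive and disjoint: (1) $s_k=m$ for some $k\leq h$; (2) $s_{h+1}=m$ while $s_i\leq m-1$ for all $i\leq h$, which forces exactly one $d_j=2$ with the remaining $d_i=1$; (3) $s_i=m-1$ for every $i$.

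Case (1) is handled exactly as in Theorem \ref{Th:2.1}: for $k=1$ one gets $\underline{x}=\underline{0}$, while for $k>1$, raising the $m$ equations of $\mathcal{S}_k$ to $q^0,\dots,q^{n-k}$ and adjoining $\mathcal{S}_1,\dots,\mathcal{S}_{k-1}$ yields at least $(k-1)m-(h+1)+(n-k+1)m=nm-(h+1)$ independent equations, hence at most $q^{h+1}$ solutions. For Case (2) I would split on $d_1$. If $d_1=1$, then $\mathcal{S}_1$ forces $\underline{x}=x_1(1,\mu_2,\dots,\mu_m)$; substituting into the full block $\mathcal{S}_{h+1}$, the coefficient of $x_1^{q^{h+1}}$ in its rows is $A_h^{(h+1)}(\alpha_2\mu_2^{q^{h+1}},\dots,\alpha_m\mu_m^{q^{h+1}},\alpha_1)^{T}$, which cannot vanish since $A_h^{(h+1)}$ is nonsingular and $\alpha_1\neq0$, so some row is a nonzero $q$-polynomial of degree $q^{h+1}$ and there are at most $q^{h+1}$ solutions. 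If $d_1=2$, so that $s_1=m-2$, $s_2=\dots=s_h=m-1$, $s_{h+1}=m$, I would generalise the count in Theorem \ref{hug2}: raise $\mathcal{S}_{h+1}$ to $q^0,\dots,q^{n-h-2}$ (leading levels $h+1,\dots,n-1$), raise $\mathcal{S}_2$ to $q^0,\dots,q^{h-1}$ (leading levels $1,\dots,h$), and keep $\mathcal{S}_1$ (leading level $0$). Since each diagonal block $A_{i-1}^{(i)}$ is of full rank, a top-down triangular elimination shows these $(m-2)+h(m-1)+(n-h-1)m=nm-(h+2)$ equations are independent, bounding the solutions by $q^{h+2}$.

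Case (3) is where the inductive hypothesis enters. Again $\mathcal{S}_1$ gives $\underline{x}=x_1(1,\mu_2,\dots,\mu_m)$. If $A_0^{(k)}(1,\mu_2,\dots,\mu_m)^{T}\neq\underline{0}$ for some $k\geq2$ (equivalently $\ker A_0^{(k)}\neq\ker A_0^{(1)}$), then $\mathcal{S}_k$ becomes a nonzero $q$-polynomial in $x_1$ of degree at most $q^{h+1}$, giving at most $q^{h+1}$ solutions. Otherwise $\ker A_0^{(k)}=\ker A_0^{(1)}$ for all $k$, so the rows of each $A_0^{(k)}$ lie in the row space of $A_0^{(1)}$ and I may use $\mathcal{S}_1$ to clear the first block column, reducing to $A_0^{(2)}=\dots=A_0^{(h+1)}=0$. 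Raising the remaining $h(m-1)$ equations of $\mathcal{S}_2\cup\dots\cup\mathcal{S}_{h+1}$ to $q^{n-1}$ sends $\underline{x}^{q^i}\mapsto\underline{x}^{q^{i-1}}$ and $f_j\mapsto x_j^{q^{h-1}}+\alpha_{j+1}^{q^{n-1}}x_{j+1}^{q^h}$, turning them into the defining equations of an $h$-dimensional subspace $H'$ of $V(hm,q^n)$ whose intersection with $V_{\mathbf{A}',h-1}$, where $\mathbf{A}'=\mathbf{A}^{q^{n-1}}$, accounts for all surviving solutions. Since $K_{\mathbf{A}'}=K_{\mathbf{A}}^{q^{n-1}}$ and being a $(1+q+\dots+q^{m-1})$-power is preserved by the Frobenius, $\mathbf{A}'\in\mathcal{A}$; and as discarding $\mathcal{S}_1$ only enlarges the solution set, the number of solutions of $(S')$ is at most $|V_{\mathbf{A}',h-1}\cap H'|\leq q^{h+1}$ by induction (by Theorem \ref{hug1} when $h=2$).

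The step I expect to be hardest is the sub-case $d_1=2$ of Case (2). There block $0$ is genuinely two-dimensional, so neither the one-parameter substitution nor the descent to $V_{\mathbf{A}',h-1}$ is available, and one must instead manufacture precisely $nm-(h+2)$ independent equations by selecting, level by level, which Frobenius power of which $\mathcal{S}_i$ contributes its leading block, and then certifying independence through the top-down elimination. This is exactly the case that forces the weaker exponent $q^{h+2}$ rather than $q^{h+1}$, and where the full-rank hypotheses on the diagonal blocks $A_{i-1}^{(i)}$ must be exploited most carefully.
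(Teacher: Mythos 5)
Your proposal follows essentially the same route as the paper's proof: induction on $h$ with Theorem \ref{hug2} as base case, the identical three-way split on the $s_i$ (some $s_k=m$ with $k\leq h$; $s_{h+1}=m$ with the forced sub-cases $s_1=m-1$ or $s_1=m-2$; all $s_i=m-1$), the same equation counts yielding $nm-(h+1)$ and $nm-(h+2)$ independent equations, and the same descent via the $q^{n-1}$-Frobenius to $V_{\mathbf{A}^{q^{n-1}},h-1}$ in the last case. Your added checks (that $\mathbf{A}^{q^{n-1}}\in\mathcal{A}$ and that discarding $\mathcal{S}_1$ only enlarges the solution set) are correct and slightly more careful than the paper's write-up.
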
 \begin{proof}We proceed by induction on $h$.
\begin{enumerate}
    \item[-] $h=2$: 

It is Theorem \ref{hug2}.

\item[-] $h>2$

 Consider an $\mathbb{F}_{q^n}$-subspace $H$ of $\F_{q^n}^{m(h+1)}$ of dimension $h+1$ and thus defined by $(m-1)(h+1)$ independent linear homogeneous equation in $z_1,\ldots,z_{m(h+1)}$.

Using the same notations as in \eqref{Eq:matrix}, our aim is to bound the number of solutions of $(S^{\prime})$ by $q^h$.

We distinguish a number of cases.

\begin{enumerate}
\item $s_{k}=m$ for some $k \in \{1,\ldots,h\}$.

Note that $k=1$ yields  $(x_1,\ldots,x_m)=(0,\ldots,0)$ and the claim follows.

Consider now $k>1$. Let $\mathcal{S}_k=\{F_1,\ldots,F_m\}$.
 The equations $(F_1)^{q^{\ell}},\ldots,(F_m)^{q^\ell}$, $\ell=0,\ldots,n-k$, together with the  $s_1+\cdots+s_{k-1}$ equations are independent. They form a system of at least 
 \begin{eqnarray*}
   (s_1+\cdots+s_{k-1}) +(n-k+1)m&\geq& (h+1)(m-1)-(h+2-k)m\\
   &&+(n-k+1)m=nm-h-1  
 \end{eqnarray*}
 independent equations and thus the number of solutions of $(S^{\prime})$ is bounded by  $q^{h+1}$ solutions.
 
 \item $s_{h+1}=m, s_1,\dots,s_h\leq m-1$
Note that $s_1\geq m-2$.
If $s_1=m-1$, without loss of generality we can suppose that
there exist $\mu_i\in \mathbb{F}_{q^n}$, $i=2,\ldots,m$, such that
$$(x_1,\ldots,x_m)=(x_1,\mu_2 x_1,\ldots,\mu_m x_1).$$

Substituting it in the $s_{h+1}=m$ equations in $\mathcal{S}_{h+1}$, there will be at least one non-zero polynomial, of degree $q^J=q^{h+1}$ in $x_1$. So we have at most $q^{h+1}$ solutions, arguing as Case (b) in the proof of Theorem \ref{hug2}.

If $s_1=m-2$, then $s_2=m-1$. Let $\mathcal{S}_{h+1}=\{ F_1,\dots,F_m\}$ and  $\mathcal{S}_{2}=\{G_1,\dots,G_{m-1}\}$. The equations $$(F_1)^{q^{\ell}},\ldots,(F_m)^{q^\ell},\qquad \ell=0,\ldots,n-h-2,$$
together with  
$$(G_1)^{q^{t}},\ldots,(G_{m-1})^{q^t}, \qquad t=0,\ldots,h-1,$$ and the $s_1$ equations in $\mathcal{S}_1$ are independent. They form a system of 
 $$m(n-h-1)+(m-1)h+m-2=nm-mh-m+hm-h+m-2=mn-h-2$$
 independent equations and thus the number of solutions of $(S^{\prime})$ is bounded by  $q^{h+2}$.

\item $s_1=\cdots=s_{h+1}=m-1$. 

Consider the $s_1=m-1$ linear equations in $\mathcal{S}_{1}$. Without loss of generality we can suppose that 
there exist $\mu_i\in \mathbb{F}_{q^n}$, $i=2,\ldots,m$, such that
$$(x_1,\ldots,x_m)=(x_1,\mu_2 x_1,\ldots,\mu_m x_1).$$

If for some $k=2,\ldots,h+1,$ the matrix $A^{(k)}_0$ is not proportional to $A^{(1)}_0$, when considering $x_i=\mu_ix_1$ in the $s_k$ equations in $\mathcal{S}_k$, at least one of them is non-vanishing (seen as equation in $x_1$) and this provides an upper bound of $q^{h+1}$ solutions to system $(S^{\prime})$. Thus, we are left wit the case $A^{(i)}_0=\gamma_iA^{(1)}_0$, for each $i=2,\ldots,h+1$, with $\gamma_i \in \mathbb{F}_{q^n}$. We can reduce the system and consider without loss of generality    $A^{(2)}_0=\cdots=A^{(h+1)}_0=0$. 

Consider the $s_2+\cdots+s_{h+1}=h(m-1)$ equations in $\mathcal{S}_2\cup \cdots \cup \mathcal{S}_{h+1}$ raised to the power  $q^{n-1}$.

In order to show that  this system of  equations has at most $q^{h+1}$ solutions, we consider the  $\mathbb{F}_{q^n}$-subspace $H'$ of dimension $h$ of $V(mh,q^{n})$ defined by 
$$
    \begin{pmatrix}
    A_1^{(h+1)}&\cdots&A_{h-1}^{(h+1)}&A_{h}^{(h+1)}\\
    A_1^{(h)}&\cdots&A_{h-1}^{(h)}&0\\
    \vdots\\
    A_{1}^{(2)}&\cdots&0&0
\end{pmatrix}^{q^{n-1}}\begin{pmatrix}
    z_1\\z_2\\\vdots\\z_{hm}
\end{pmatrix}=0,
$$

which now is a subsystem of $(S)$.

The size of $H'\cap V_{\textbf{A}^{q^{n-1}},h-1}$, which is by induction at most $q^{h+1}$, provides the desired bound for the number of solutions of $(S^{\prime})$.
\end{enumerate}
\end{enumerate}

\end{proof}

Summing up, we have the following result. 

\begin{theorem}\label{thm:new}
  The family of maximum $h$-scattered $\F_q$-subspaces $V_{\mathbf{A},h}$ of $\F_{q^n}^{m(h+1)}$ is new for every  $2\leq h\leq n-3$ .
\end{theorem}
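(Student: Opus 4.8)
The plan is to use the $(h+1,h+2)$-evasiveness of Theorem~\ref{thm:evas} as an equivalence invariant that separates $V_{\mathbf{A},h}$ from the maximum $h$-scattered subspaces of $V(m(h+1),q^n)$ available before. For arbitrary $m\ge 3$ and $2\le h\le n-3$ the principal previously known family is the decomposable one of \cite[Example 2.5 and Theorem 2.6]{CsMPZ2019}: fixing an $\F_{q^n}$-direct sum decomposition $V(m(h+1),q^n)=V_1\oplus\cdots\oplus V_m$ with each $\dim_{\F_{q^n}}V_i=h+1$, one takes $U=W_1\oplus\cdots\oplus W_m$ where $W_i\subseteq V_i$ is maximum $h$-scattered, so that $\dim_{\F_q}W_i=n$.

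First I would record that $(h+1,h+2)$-evasiveness is preserved under the natural equivalence of $\F_q$-subspaces: an element of $\Gamma\mathrm{L}(m(h+1),q^n)$ maps $\F_{q^n}$-subspaces to $\F_{q^n}$-subspaces of the same $\F_{q^n}$-dimension and preserves the $\F_q$-dimension of every intersection with the given subspace, so equivalent subspaces carry identical $(h,r)$-evasiveness data. It therefore suffices to exhibit a single evasiveness value on which $V_{\mathbf{A},h}$ and $U$ disagree.

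Next I would check that $U$ fails to be $(h+1,h+2)$-evasive. Taking the $(h+1)$-dimensional $\F_{q^n}$-subspace $V_1$ carrying the first factor, and using $W_1\subseteq V_1$, we get
$$\dim_{\F_q}(U\cap V_1)\ge \dim_{\F_q}W_1=n\ge h+3>h+2,$$
where $n\ge h+3$ is exactly the hypothesis $h\le n-3$. Hence $V_1$ is an $(h+1)$-dimensional $\F_{q^n}$-subspace meeting $U$ in $\F_q$-dimension strictly larger than $h+2$, so $U$ is not $(h+1,h+2)$-evasive. Since $V_{\mathbf{A},h}$ \emph{is} $(h+1,h+2)$-evasive by Theorem~\ref{thm:evas}, the invariance above forces $V_{\mathbf{A},h}$ to be inequivalent to $U$, proving it is new. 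Conceptually this is where indecomposability enters: a decomposable maximum $h$-scattered subspace confines a factor to an $(h+1)$-dimensional $\F_{q^n}$-subspace and thereby produces a heavy subspace, whereas $(h+1,h+2)$-evasiveness forbids any such factor.

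The step I expect to require the most care is the overlap of parameters with the family of \cite{BGGM}: for $h=2$, $n=6$ and $q$ even the subspaces $V_{\mathbf{A},2}$ live in $V(3m,q^6)$, precisely the ambient of the maximum $2$-scattered subspaces constructed there. To be sure that $V_{\mathbf{A},2}$ is genuinely new in this regime one must verify that those subspaces are distinguished from it, either because they are again decomposable (hence not $(3,4)$-evasive, so that the argument above applies verbatim) or, failing that, by a finer invariant such as the generalized weights computed in the following section. Elsewhere in the range $2\le h\le n-3$, $m\ge 3$, the direct sums of \cite{CsMPZ2019} are the only prior examples and the evasiveness argument is conclusive; one should also spell out that $n\ge h+3$ is exactly what forces the factor weight $n$ to exceed $h+2$, which is why $h=n-2$ is excluded and why the case $h=1$, for which Theorem~\ref{hug1} yields only $(2,4)$-evasiveness, must be handled apart.
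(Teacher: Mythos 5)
Your argument is correct and coincides with the paper's own (much terser) proof: both deduce newness from the $(h+1,h+2)$-evasiveness of Theorem~\ref{thm:evas} together with the observation that $h\le n-3$ forces any direct sum of maximum $h$-scattered subspaces of $(h+1)$-dimensional $\F_{q^n}$-subspaces to meet one of those $(h+1)$-spaces in $\F_q$-dimension $n>h+2$. Your extra caveat about the overlapping parameters of \cite{BGGM} is a sensible point that the paper leaves implicit, but it does not change the approach.
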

\begin{proof}
    It is sufficient to note that, since $h<n-2$, taking Theorem \ref{thm:evas} into account, the subspace $V_{\mathbf{A},h}$ is not the direct sum of maximum $h$-scattered $\F_q$-subspaces of $(h+1)$-dimensional $\F_{q^n}$-subspaces. 
\end{proof}

\section{Rank-metric codes}

{\it Rank-metric} codes were originally introduced by Delsarte in the late $70$'s \cite{delsarte1978bilinear}, and then resumed a few years later by Gabidulin in \cite{gabidulin1985theory}.

Let $n,t \in \mathbb{N}$ be two positive integers such that $n,t \geq 2$, and let $q$ be a prime power.  
Let consider $v=(v_1,v_2,...,v_t) \in \F_{q^n}^t$, the \textit{rank weight} of $v$ is defined as
$$\omega_{\mathrm{rk}}(v)=\dim \langle v_1,v_2,\ldots,v_t\rangle_{\F_q}.$$
A rank-metric code ${\cal C} \subseteq \F_{q^n}^t$ of {\it length} $t$, is a subset of $\F_{q^n}^t$ considered as a vector space endowed with the metric defined by the map $$d(v,w)= \omega_{rk}(v-w),$$
where $v$ and $w \in \F_{q^n}^t.$ Elements of ${\cal C}$ are called {\it codewords}. A {\it linear} rank-metric code ${\cal C}$ is an $\F_{q^n}$-subspace of $\F_{q^n}^t$ endowed with
the rank metric. If ${\cal C} \subseteq \F_{q^n}^t$ is a linear rank-metric code, the minimum distance between two distinct codewords of ${\cal C}$ is $$d=d({\cal C})=\min \{\omega_{rk}(v) \, | \, v \in \cal C \setminus \{{\bf 0}\}\}.$$ 

\noindent A linear rank-metric code ${\cal C} \subseteq \F_{q^n}^t$ of length $t$, dimension $k$, and minimum distance $d$ is referred in the literature as to an $[t,k,d]_{q^n/q}$ code, or as to an $[t,k]_{q^n/q}$ code, depending on whether the minimum distance is known or not.  
These parameters are related by an inequality, which is known as the Singleton-like bound. Precisely, if $\mathcal{C}$ is an $[t,k,d]_{q^n/q}$ code, then 
\begin{equation}\label{singleton-bound}
    nk \leq \min\{n(t - d + 1), t(n - d + 1)\}; 
\end{equation}
see \cite{delsarte1978bilinear}.

Codes attaining this bound with equality are called {\it maximum rank distance (MRD) codes}, and they are considered to be optimal, due to their largest possible error-correction capability. From the classification of $\F_{q^n}$-linear isometries (see \cite{Berger}), we say that two $[t,k,d]_{q^n/q}$ codes ${\cal C} _1, {\cal C} _2$ are {\it (linearly) equivalent} if there exist $A\in \mathrm{GL}(t,q)$ and $a\in\F_{q^n}^*$ such that \begin{equation*}
{\cal C} _2=a{\cal C} _1\cdot A=\{avA \,:\, v \in {\cal C}_1\}.
\end{equation*}

The geometric counterpart of non-degenerate rank-metric codes (that is,
the columns of any generator matrix of $\mathcal{C}$ are $\F_q$-linearly independent) are the so-called
$q$-systems. Let $U$ be an $\F_q$-subspace of $\F_{q^n}^k$ and let $H$ be an $\F_{q^n}$-subspace of $\F_{q^n}^k$. The \textit{weight} of $H$ in $U$ is $\mathrm{wt}_{U}(H)=\dim_{\F_q}(H\cap U)$. Assume now that $U$ has dimension $t$ over $\F_q$. 

We say that $U$ is an $[t,k,d]_{q^n/q}$ \textit{system} if $\langle U \rangle_{\F_{q^n}}=\F_{q^n}^k$ and 
\begin{equation*}
d=\,t-\mathrm{max}\{\mathrm{wt}_{U}(H) \,:\, H \subseteq \F_{q^n}^k \textnormal{ with } \dim_{\F_{q^n}}(H)=k-1 \}.
\end{equation*}

    More generally, for each $1 \leq \rho \leq k-1$, the parameters

\begin{equation*}
d_{\rho}=\,t-\mathrm{max}\{\mathrm{wt}_{U}(H) \,:\, H \subseteq \F_{q^n}^k \textnormal{ with } \dim_{\F_{q^n}}(H)=k-\rho \}
\end{equation*}
are known as the {\it $\rho$-generalized rank weight} of the system $U$; see \cite[Definition 4]{Randrianarisoa2020geometric}.

 As before, if the parameter $d$ is not relevant, we will write that $U$ is an $[t,k]_{q^n/q}$ system. Furthermore, when none of the parameters is relevant, we will generically refer to $U$ as to a $q$-system.

 Two $[t,k,d]_{q^n/q}$ systems $\mathcal{U}_1,\mathcal{U}_2$ are (linearly) equivalent if there exists $A\in \mathrm{GL}(k,q^n)$ such that 
 \begin{equation*}
 \mathcal{U}_1\cdot A:=\{uA \,:\, u \in \mathcal{U}_1\}=\mathcal{U}_2.
 \end{equation*}

In \cite{Randrianarisoa2020geometric}, a one-to-one correspondence between  equivalence classes of non-degenerate codes  and equivalence classes of the $q$-systems is established.

Moreover, if $\mathcal{C}$ is the $[t,k,d]_{q^n/q}$ code associated to $\mathcal{U}$, the $\rho$-generalized weight of $\mathcal{C}$ is simply defined as the $\rho$-generalized weight of one associated system $\mathcal{U}$; see \cite[Definition 5]{Randrianarisoa2020geometric}. In this case we say that $\mathcal{U}$ is an $[t,k,(d_1,\ldots,d_k)]_{q^n/q}$ system and $\mathcal{C}$ is the $[t,k,(d_1,\ldots,d_k)]_{q^n/q}$ associated code. It is easy to see that the first generalized weight $d_1$ of $\mathcal{C}$ coincides with its minimum distance $d$. 

By \cite{zini2021scattered}, there is a correspondence between maximum $h$-scattered $\F_q$-subspaces of $V(r,q^n)$, with $n\geq h+3$, i.e. $[rn/(h+1),r,n-h]_{q^n/q}$ systems, and  $\F_q$-linear MRD codes with parameters $[rn/(h+1),r,n-h]_{q^n/q}$ and right idealizer isomorphic to $\F_{q^n}$.
By Theorem \ref{thm:new}, we get the following result.
\begin{theorem}
    The MRD code associated with the set $V_{\mathbf{A},h}$ defined in \eqref{set} is new for every $2\leq h\leq n-3$.
\end{theorem}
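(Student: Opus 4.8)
The plan is to transport the newness of the $q$-system $V_{\mathbf{A},h}$, already established in Theorem \ref{thm:new}, to the associated rank-metric code by means of the geometric correspondence recalled just above the statement. The crucial feature of that correspondence, due to \cite{Randrianarisoa2020geometric}, is that it is a bijection between equivalence classes of non-degenerate codes and equivalence classes of $q$-systems: two non-degenerate codes are (linearly) equivalent if and only if their associated systems are (linearly) equivalent. Hence inequivalence at the level of systems forces inequivalence at the level of codes, and this is the lever I would pull.

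First I would fix the parameters: here $r=m(h+1)$, so that $V_{\mathbf{A},h}$ is an $[mn,m(h+1),n-h]_{q^n/q}$ system, and the hypothesis $2\le h\le n-3$ guarantees $n\ge h+3$, which is exactly the range in which \cite{zini2021scattered} yields an MRD code with right idealizer isomorphic to $\F_{q^n}$. Next I would recall that every previously known maximum $h$-scattered subspace of $V(m(h+1),q^n)$ arises, up to equivalence, as a direct sum of maximum $h$-scattered subspaces of $(h+1)$-dimensional $\F_{q^n}$-spaces (cf.\ \cite[Example 2.5 and Theorem 2.6]{CsMPZ2019}), hence is decomposable. Theorem \ref{thm:new}, built on the $(h+1,h+2)$-evasiveness of Theorem \ref{thm:evas}, shows that $V_{\mathbf{A},h}$ is indecomposable, and therefore inequivalent to all of these. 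Arguing by contradiction, if the code attached to $V_{\mathbf{A},h}$ were equivalent to one of the known MRD codes, the bijectivity of the correspondence would produce an equivalence between $V_{\mathbf{A},h}$ and a decomposable system, contradicting Theorem \ref{thm:new}.

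A more quantitative certificate, which also explains the claim in the abstract, comes from the generalized rank weights. Since these are invariants of the equivalence class and are read off directly from the weights $\mathrm{wt}_U(H)$ of the system, the $(h+1,h+2)$-evasiveness of $V_{\mathbf{A},h}$ yields the lower bound $d_{(m-1)(h+1)}\ge mn-(h+2)$ on the generalized weight indexed by $\rho=(m-1)(h+1)$, corresponding to $(h+1)$-dimensional $\F_{q^n}$-subspaces. For a decomposable system one of the $(h+1)$-dimensional blocks meets $U$ in dimension $n$, forcing that generalized weight to be at most $(m-1)n$; since $h+2<n$ in our range, the two values genuinely differ, so the codes cannot be equivalent. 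This gives a self-contained, computable separation.

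The main obstacle is not the transfer itself, which is essentially formal once the correspondence is in hand, but making precise that all codes \emph{previously identified} with these exact parameters indeed come from decomposable systems, so that the indecomposability of $V_{\mathbf{A},h}$ really separates it from the entire known list rather than from a single construction. This is exactly what the evasiveness computation of Theorem \ref{thm:evas}, and its consequence Theorem \ref{thm:new}, are designed to guarantee; once that input is taken for granted, the remaining verification of the parameters and the application of the bijection are routine.
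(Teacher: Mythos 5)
Your proposal is correct and follows essentially the same route as the paper: the paper derives this theorem immediately from Theorem \ref{thm:new} together with the correspondence of \cite{zini2021scattered} (and \cite{Randrianarisoa2020geometric}) between equivalence classes of maximum $h$-scattered subspaces and equivalence classes of the associated MRD codes, exactly the transfer you describe. Your additional separation via the $(m-1)(h+1)$-th generalized rank weight is a nice explicit certificate but is not needed for (and not part of) the paper's one-line argument.
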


In what fallows, we want to compare the generalized rank weights of the MRD codes arising from maximum decomposable $h$-scattered subspace with those of our family.  

If $\mathcal{C}$ is an $[t,k]_{q^n/q}$ code, we define $\mathcal{C}^\perp$ to be  its orthogonal complement with respect to the standard inner product on $V(t,q^n)$. In other words, $\mathcal{C}^\perp$ is the $[t,t-k]_{q^n/q}$ code given by
$$ \mathcal{C}^\perp=\{ u \in V(t,q^n) \,:\, uv^\top=0, \mbox{ for every } v \in \mathcal{C} \}$$
and it is called the \emph{dual code} of $\mathcal{C}$.

\begin{remark}\rm{
In \cite{CsMPZ2019}, the authors introduced a duality relation, referred to as \emph{Delsarte duality}, between maximum $h$-scattered subspaces of $V(r,q^n)$ reaching Bound \eqref{hscatbound} and maximum $(n-h-2)$-scattered subspaces of $V(rn/(h+1)-r,q^n)$ reaching Bound \eqref{hscatbound}. 

By \cite[Result 4.7 and Theorem 4.12]{CsMPZ2019} if $\mathcal{C}$ is the MRD code associated with a maximum $h$-scattered subspace $U$ then, up to equivalence, the MRD code associated to the  Delsarte dual of $U$ is  dual code of $\mathcal{C}$. Since two codes $\mathcal{C}$ and $\mathcal{C}'$ are equivalent if and only the corresponding Delsarte duals are equivalent (see, e.g., \cite{sheekey_new_2016}), also the maximum $(n-h-2)$-scattered $\F_q$-subspaces of $V(m(n-h-1),q^n)$, with $2\leq h\leq n-3$, obtained by the Delsarte dual operation on $V_{\mathbf{A},h}$ subspaces of Theorem \ref{thm:new} (and the related MRD codes) are new.
}
\end{remark}

\begin{prop}[\textnormal{see \cite{kurihara2015relative,ducoat2015generalized}}]\label{prop:gen_weights_properties}
 Let $\mathcal{C}$ be an $[t,k,(d_1,\ldots,d_k)]_{q^n/q}$ code and let $\mathcal{C}^\perp$ be its dual $[t,t-k,(d_1^\perp,\ldots,d_{t-k}^\perp)]_{q^n/q}$ code. Then
 \begin{enumerate}
     \item[$(1)$] $1\leq d_1<d_2<\ldots<d_k\leq t$; \hfill \textbf{(Monotonicity)}
     \item[$(2)$] $\{d_1, \ldots,d_k\}\cup \{t+1-d_1^\perp,\ldots,  t+1-d_{t-k}^\perp\}=\{1,\ldots,t\}$. \hfill  \textbf{(Wei-type duality)} 
 \end{enumerate}
\end{prop}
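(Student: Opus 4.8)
The statement is the rank-metric analogue of Wei's classical duality theorem for generalized Hamming weights, and the plan is to transport Wei's argument to the rank setting through the system and the Galois-closed description of the weights. \textbf{Monotonicity.} I would argue directly with the system $U \le \F_{q^n}^k$ associated with $\mathcal{C}$, writing $M(c) = \max\{\dim_{\F_q}(U \cap H) : H \le \F_{q^n}^k,\ \dim_{\F_{q^n}} H = c\}$, so that $d_\rho = t - M(k-\rho)$. Since $U$ spans $\F_{q^n}^k$, no proper subspace $H$ contains $U$, whence $M(c) < t$ for $c < k$, while $M(k) = t$ and $M(0)=0$; this already gives $d_k = t$ and $d_\rho \ge 1$. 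For strict monotonicity I would take an $H^{*}$ of dimension $k-\rho-1$ realizing $M(k-\rho-1) = t - d_{\rho+1}$, choose $u \in U \setminus H^{*}$ (possible, as $U$ is not contained in the proper subspace $H^{*}$), and set $H = H^{*} \oplus \langle u\rangle_{\F_{q^n}}$. Then $\F_q u \subseteq (U\cap H)\setminus(U\cap H^{*})$ forces $\dim_{\F_q}(U\cap H) \ge \dim_{\F_q}(U\cap H^{*})+1$, hence $d_\rho \le d_{\rho+1}-1$. Together with $d_1 \ge 1$ and $d_k = t$ this yields item $(1)$.

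\textbf{Wei-type duality.} I would first reduce the partition claim to disjointness: the $d_\rho$ are $k$ distinct elements of $\{1,\dots,t\}$ and the $t+1-d_j^\perp$ are $t-k$ distinct elements of $\{1,\dots,t\}$, and $k+(t-k)=t$, so it suffices to show the two families never coincide, i.e.\ $d_\rho + d_j^\perp \ne t+1$ for all $\rho,j$. To establish this I would pass to the equivalent description of the generalized rank weights in the ambient space $\F_{q^n}^t$ via Galois-closed (Frobenius-stable) subspaces, following Kurihara--Matsumoto--Uyematsu: setting $g(c)=\max\{\dim_{\F_{q^n}}(\mathcal{C}\cap V): V \le \F_{q^n}^t \text{ Galois-closed},\ \dim_{\F_{q^n}} V = c\}$, the weights $\{d_1,\dots,d_k\}$ are exactly the positions $p$ where $g(p)>g(p-1)$, and analogously for $g^\perp$ attached to $\mathcal{C}^\perp$.

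The heart of the argument is the orthogonality bridge. The map $V\mapsto V^\perp$ is an involution on Galois-closed subspaces with $\dim_{\F_{q^n}} V^\perp = t-\dim_{\F_{q^n}} V$, and since $\mathcal{C}^\perp\cap V^\perp=(\mathcal{C}+V)^\perp$ the modular law gives
\begin{equation*}
\dim_{\F_{q^n}}(\mathcal{C}^\perp\cap V^\perp)=\dim_{\F_{q^n}}(\mathcal{C}\cap V)+(t-k)-\dim_{\F_{q^n}} V .
\end{equation*}
Maximizing over Galois-closed $V$ of fixed dimension turns this into the profile identity $g^\perp(t-p)=g(p)+(t-k)-p$. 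Taking consecutive differences yields $g^\perp(t-p)-g^\perp(t-p-1)=1-\big(g(p+1)-g(p)\big)$, so $g^\perp$ jumps at $t-p$ if and only if $g$ does \emph{not} jump at $p+1$. Re-indexing by $p=t+1-c'$ this reads: $c'$ is a weight of $\mathcal{C}^\perp$ if and only if $t+1-c'$ is not a weight of $\mathcal{C}$; equivalently $\{t+1-d_j^\perp\}=\{1,\dots,t\}\setminus\{d_\rho\}$, which is precisely item $(2)$ and in particular gives the required disjointness.

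\textbf{Main obstacle.} The delicate points are not the linear algebra but the passage between the two descriptions of the weights: verifying that the system definition of the $d_\rho$ used in the statement agrees with the Galois-closed-subspace definition in $\F_{q^n}^t$, that $V^\perp$ is again Galois-closed, and that each profile increases by at most $1$ at every step, so that its jump set has the correct cardinality and coincides with the weight sequence. This last point is exactly the strict monotonicity established in the first paragraph, now applied to both $\mathcal{C}$ and $\mathcal{C}^\perp$.
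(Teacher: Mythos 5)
The paper does not prove this proposition at all: it is imported verbatim from the cited references (Kurihara--Matsumoto--Uyematsu and Ducoat--Kyureghyan), so there is no in-paper argument to compare yours against. Judged on its own, your reconstruction follows the standard route from those sources. The monotonicity part is complete and correct as written: the extension $H=H^{*}\oplus\langle u\rangle_{\F_{q^n}}$ with $u\in U\setminus H^{*}$ gives $M(k-\rho)\geq M(k-\rho-1)+1$, hence $d_\rho\leq d_{\rho+1}-1$, and the endpoints $d_1\geq 1$, $d_k=t$ follow from $M(k-1)<t$ and $M(0)=0$. The duality part is also the right argument: the involution $V\mapsto V^{\perp}$ on Galois-closed subspaces, the identity $\mathcal{C}^{\perp}\cap V^{\perp}=(\mathcal{C}+V)^{\perp}$, the profile relation $g^{\perp}(t-p)=g(p)+(t-k)-p$, and the jump/non-jump dichotomy (which needs $g(c+1)\leq g(c)+1$; this holds because a Galois-closed hyperplane of a Galois-closed $V'$ meets $\mathcal{C}\cap V'$ in codimension at most one).

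The one genuine gap is the one you flag yourself: you use two different definitions of the $d_\rho$ --- the system definition $d_\rho=t-M(k-\rho)$ from the statement, and the Galois-closed-subspace jump definition in $\F_{q^n}^{t}$ --- and the proof of their equivalence is not sketched. This is not a small bookkeeping step; it is essentially Randrianarisoa's correspondence between codes and $q$-systems extended to all $\rho$, and without it the duality half of your argument proves a statement about a priori different invariants. Since the proposition is quoted from the literature precisely because this equivalence and the duality are nontrivial, your outline is acceptable as a roadmap, but a self-contained proof would have to supply that lemma. A second, minor imprecision: the unit-increment property of the profile $g$ is not "exactly" the strict monotonicity from your first paragraph (that concerns $M$, over all $\F_{q^n}$-subspaces, not $g$ over Galois-closed ones); it needs the separate hyperplane argument indicated above.
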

\bigskip

Now we determine the generalized rank weights of an MRD code arising from a maximum $h$-scattered $\F_q$-subspace of $V(m(h+1),q^n)$ obtained as direct sum of $m$ maximum scattered (with respect to hyperplanes) $\F_q$-subspaces in $V(h+1,q^n)$ spaces. 

\begin{prop}\label{prop:genweights_directsum}
 Let ${\mathcal{C}}_1,\ldots, {\mathcal{C}}_m$ be $[n,h+1,n-h]_{q^n/q}$ MRD codes. Then,  $\mathcal{C}:=\bigoplus_{i=1}^m {\mathcal{C}}_i$ is an $[mn,m(h+1),n-h]_{q^n/q}$ code whose  generalized rank weights are given by  
\begin{eqnarray*}
d_{i}(\mathcal{C})&=& n-h-1+i, \quad   \textrm{if}  \quad 1 \leq i \leq h+1, \\
d_{m(h+1)-i}(\mathcal{C})&=&mn-i, \quad   \textrm{if}  \quad  0 \leq i \leq h,\\
d_{(m-1)(h+1)}(\mathcal{C})&=&(m-1)n.  
\end{eqnarray*}
  Also, if $m>2$,
 $$n+1\leq d_{h+2}<d_{h+3}<\cdots< d_{(m-1)(h+1)-1}\leq (m-1)n-1$$ and, in particular, 
 $$d_{k(h+1)}(C)\leq kn $$ for any $k=1,\dots,m-1$.
\end{prop}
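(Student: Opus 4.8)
The plan is to establish the parameters of $\mathcal{C}=\bigoplus_{i=1}^m \mathcal{C}_i$ by exploiting the geometric correspondence between codes and $q$-systems, together with the duality tools provided by Proposition~\ref{prop:gen_weights_properties}. First I would translate everything into the language of systems: each MRD code $\mathcal{C}_i$ corresponds to a maximum scattered (with respect to hyperplanes) $\F_q$-subspace $U_i$ of $V(h+1,q^n)$, i.e.\ an $[n,h+1,n-h]_{q^n/q}$ system, and the direct sum $\mathcal{C}$ corresponds to the $q$-system $U=U_1\oplus\cdots\oplus U_m$ sitting inside $V(m(h+1),q^n)$ as a block-diagonal subspace. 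The length of $U$ is $mn$ and its $\F_{q^n}$-dimension is $m(h+1)$, which already gives the claimed parameters $[mn,m(h+1)]_{q^n/q}$; the minimum distance $d_1=n-h$ follows because a hyperplane of $V(m(h+1),q^n)$ meets $U$ in weight at most one more than the maximal block weight, and the scattered property of each $U_i$ pins down the extreme intersection.

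Next I would compute the generalized weights directly from their definition
$$d_{\rho}(\mathcal{C})=mn-\max\{\mathrm{wt}_U(H):\dim_{\F_{q^n}}(H)=m(h+1)-\rho\}.$$
The key structural fact is that, because $U$ is a direct sum of blocks each generating a coordinate subspace $V(h+1,q^n)$, the weight of an $\F_{q^n}$-subspace $H$ in $U$ decomposes, at least up to the appropriate bookkeeping, according to how $H$ projects onto each block. For the small-$\rho$ regime $1\le i\le h+1$ I would argue that the maximal intersection is achieved by an $H$ of corank $i$ that is chosen to slice a single block as deeply as possible; since each $U_i$ is scattered with respect to hyperplanes in $V(h+1,q^n)$, an $\F_{q^n}$-subspace of the ambient of codimension $i$ can meet one block $U_i$ in $\F_q$-dimension at most $i$, and taking the remaining blocks whole contributes $(m-1)n$, yielding $d_i=mn-((m-1)n+h+1-i)=n-h-1+i$. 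For the large-$\rho$ regime $d_{m(h+1)-i}=mn-i$ with $0\le i\le h$, I would dualize: by the Wei-type duality in Proposition~\ref{prop:gen_weights_properties}(2), the top generalized weights of $\mathcal{C}$ are determined by the bottom generalized weights of $\mathcal{C}^\perp$, and $\mathcal{C}^\perp$ is again a direct sum of MRD codes (the duals $\mathcal{C}_i^\perp$), so the same small-$\rho$ computation applies on the dual side and transfers back.

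The special value $d_{(m-1)(h+1)}=(m-1)n$ reflects the geometrically transparent event where $H$ can be taken as the span of $m-1$ whole blocks: such an $H$ has $\F_{q^n}$-dimension $(m-1)(h+1)$, hence corank $h+1$, which forces $\rho=(m-1)(h+1)$, and it meets $U$ in $\F_q$-dimension exactly $(m-1)n$, giving weight $mn-(m-1)n=n$; but this must be reconciled with the indexing so that the \emph{value} $(m-1)n$ is attained as a generalized weight, which is why it is listed separately. Finally, for the interlacing and the inequality $d_{k(h+1)}(\mathcal{C})\le kn$, I would combine strict monotonicity from Proposition~\ref{prop:gen_weights_properties}(1) with the explicit anchor points already computed: the values $d_{h+2},\ldots,d_{(m-1)(h+1)-1}$ are squeezed strictly between the known value $n$ (at index $h+1$) and the known value $(m-1)n$ (at index $(m-1)(h+1)$), forcing $n+1\le d_{h+2}$ and $d_{(m-1)(h+1)-1}\le (m-1)n-1$; the bound $d_{k(h+1)}\le kn$ then follows by the same ``take $k$ whole blocks'' construction that produces a witnessing subspace of weight at least $(m-k)n$ against a corank-$k(h+1)$ subspace. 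The main obstacle I anticipate is the middle regime $h+2\le \rho\le (m-1)(h+1)-1$: here the optimal $H$ need not respect the block decomposition cleanly, so the decomposition-of-weight argument must be made precise (one genuinely needs that slicing across several blocks cannot beat the whole-block-plus-one-deep-slice strategy), and I expect this is where the scattered-with-respect-to-hyperplanes hypothesis on each $U_i$ does the real work rather than merely bookkeeping with dimensions.
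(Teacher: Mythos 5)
Your overall strategy (block-respecting subspaces as witnesses, monotonicity and Wei-type duality to pin down the remaining values) matches the paper's, and several pieces are sound: the witnesses $\bigoplus_{j\ne t}V_j\oplus H_{i,t}$ give $d_i\le n-h-1+i$ for $1\le i\le h+1$, the witness consisting of $m-k$ whole blocks gives $d_{k(h+1)}\le kn$, and your dualization for the top $h+1$ weights works (the paper instead gets $d_{m(h+1)-h+i}=mn-h+i$ directly from the $h$-scatteredness of $U=\bigoplus U_j$ plus monotonicity; both routes are legitimate). Two caveats on the parts you did treat: in the small-$\rho$ regime you assert the maximum \emph{is achieved} by a block-respecting $H$, which you never prove and do not need -- the block construction only gives the upper bound $d_i\le n-h-1+i$, and the matching lower bound comes from monotonicity starting at $d_1=n-h$, a step you should state explicitly; and the worry you raise about the middle regime is largely moot, since the statement only bounds those weights.

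The genuine gap is $d_{(m-1)(h+1)}=(m-1)n$, and your treatment of it is mis-indexed. A subspace $H$ of $\F_{q^n}$-dimension $(m-1)(h+1)$ has $\rho=m(h+1)-\dim_{\F_{q^n}}H=h+1$, so the ``span of $m-1$ whole blocks'' witness you describe proves $d_{h+1}\le n$ (consistent with your first list), not anything about $d_{(m-1)(h+1)}$. For $\rho=(m-1)(h+1)$ the relevant subspaces are the $(h+1)$-dimensional ones; a single block $V_t$ shows the maximum weight is at least $n$, hence $d_{(m-1)(h+1)}\le(m-1)n$, but the substantive half is the reverse inequality: one must show that \emph{no} $(h+1)$-dimensional $\F_{q^n}$-subspace meets $U$ in $\F_q$-dimension more than $n$, i.e.\ that $U$ is $(h+1,n)$-evasive. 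This does not follow from your bookkeeping or from monotonicity (which only gives $d_{(m-1)(h+1)}\ge n+(m-2)(h+1)$), and the paper has to invoke \cite[Thm.~3.3]{marino2023evasive} together with the evasiveness of the direct sum to close it. Your phrase ``this must be reconciled with the indexing'' papers over exactly this point, and without it the chain $n+1\le d_{h+2}<\cdots<d_{(m-1)(h+1)-1}\le(m-1)n-1$ loses its right-hand anchor.
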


\begin{proof}
Let us divide the proof in five steps. 

\begin{enumerate}
    \item First of all, by the monotonicity of Proposition \ref{prop:gen_weights_properties}(1), since each $\mathcal{C}_j$, with $j\in\{1,\ldots,m\}$, is an MRD code of length $n$, we  have
    $$d_{i}(\mathcal{C}_j)=n-(h+1)+i, \qquad \mbox{ for each } i \in \{1,\ldots,h+1\}.$$

\item  Let us write 
$$d_i:=d_{i}(\mathcal{C}),\quad \mbox{ for each } i \in \{1,\ldots, m(h+1)\}.$$ Since every codeword in $\mathcal{C}$ is of the form $(u_1  \mid  u_2  \mid \cdots \mid u_m)$, with $u_j\in \mathcal{C}_j$ and $j\in\{1,\ldots,m\}$,  it is clear that the minimum rank distance of $\mathcal{C}$ (which is the first generalized rank weight) coincides with the minimum among the minimum rank distances of $\mathcal{C}_j$'s, i.e.
 $$d_1=n-h.$$
 Now, for each $j\in\{1,\ldots,m\}$, let $V_j=V(h+1,q^m)$ and $U_j$ be an $[n,h+1,(n-h,n-h+1,\ldots,n)]_{q^m/q}$ system associated with $\mathcal{C}_j$. The $[mn,m(h+1)]_{q^n/q}$ system $U:=\bigoplus_{j=1}^m U_j \subseteq V:=\bigoplus_{i=j}^mV_j$ is then associated with the code $\mathcal{C}$. Now, for each $i \in \{2,\ldots,h+1\}$, consider an $\F_{q^n}$-subspace of codimension $i$ in $V$ given by $\Pi_{i,t}=\bigoplus_{j=1, j\ne t}^m V_j\oplus H_{i,t}$, where $H_{i,t}$ is an $\F_{q^n}$-subspace of codimension $i$ in $V_t$, for a given $t\in\{2,\ldots,m\}$, such that 
 $$d_{i}(\mathcal{C}_t)=n-\dim_{\F_q}(U_t\cap H_{i,t})=n-(h+1)+i.$$ Then, 
 for each $i \in \{2,\ldots,h+1\}$ we have  
 \begin{align*} d_i &= mn-\max\{\dim_{\F_q}(U\cap \Pi) \,:\, \Pi \subseteq V, \ \dim_{\mathbb{F}_{q^m}}(\Pi)=m(h+1)-i \}  \\
 &\leq mn-\dim_{\F_q}(U\cap \Pi_{i,t})\\
 &=mn-\dim_{\F_q}\left(\bigoplus_{j=1,j\ne t}^mU_j\right)+\dim_{\F_q}(U_t\cap H_{i,t})=n-(h+1)+i. \end{align*}
 On the other hand, since $d_1=n-h$, by the monotonicity of Proposition \ref{prop:gen_weights_properties}(1) we have $d_i\geq n-(h+1)+i$ for each $i \in \{2,\ldots,h+1\}$, and thus equality.
 
 \item Let us now consider the dual code $\mathcal{C}^\perp$, which is an $[mn,m(n-h-1),(d_1^\perp,\ldots, d_{m(n-h-1)}^\perp)]_{q^n/q}$ code. It is easy to see that $\mathcal{C}^\perp=\bigoplus_{j=1}^m\mathcal{C}_j^\perp$. However, since the dual of an MRD code is itself MRD,  by step (a), for each $j\in\{1,\ldots,m\}$, $\mathcal{C}_j^\perp$ is an $[n,n-h-1,(h+2,\ldots,n)]_{q^n/q}$ code, and by step (b) the first $n-h-1$ generalized rank weights of $\mathcal{C}^\perp$ are 
 $$d_i^\perp=n-(n-(h+1))+i=h+1+i, \quad \mbox{ for each } i \in \{1,\ldots, n-h-1\}.$$
 
 \item  Since $U$ is $h$-scattered, $d_{m(h+1)-h}\geq mn-h$. Thus, by the monotonicity of Proposition \ref{prop:gen_weights_properties}(1)
$$ d_{m(h+1)-h+i}\geq mn-h+i, \quad \mbox{ for each } i \in \{0,\ldots,h\},$$
which yields 
 $$d_{m(h+1)-h+i} =mn-h+i, \quad \mbox{ for each } i \in \{0,\ldots,h\}.$$

 Recall that $d_1=n-h$, \ldots, $d_{h+1}=n$. Also, 
 $d_1^\perp=h+2$,\ldots, $d_{n-h-1}^\perp=n.$
 
 By the Wei-type duality of Proposition \ref{prop:gen_weights_properties}(2),  we  have 
 $$ \{d_{h+2},\ldots,d_{(m-1)(h+1)} \}\cup \{mn+1-d_{n-h}^\perp,\ldots,mn+1-d_{m(n-h-1)}^\perp \}=$$ $$=\{1,\ldots, n-h-1\}\cup \{n+1,\ldots,(m-1)n\}. $$
Since $d_{h+1}=n$,
$$\{d_{h+2},\ldots,d_{(m-1)(h+1)}\}\subset \{n+1,\ldots,(m-1)n\}.$$

 \item Since the $\F_q$-subspace $U$ associated with $\mathcal{C}$ is $(h+1,n)$-evasive and not $(h+1,n-1)$ evasive, from \cite[Thm. 3.3]{marino2023evasive} $d_{(m-1)(h+1)}=n(m-1)$, the claim follows.

 \end{enumerate}
We have $d_{k(h+1)}(C)\leq kn$ for any $k=1,\dots,m-1$ since the subspace $H:=\bigoplus_{i=1}^{m-k} {\mathcal{C}}_i$ has dimension $(m-k)(h+1)$ and $\text{wt}(H)=(m-k)n$.
 
\end{proof}

\begin{remark}
    \rm{
If $\mathcal{C}$ is an $[mn,m(h+1),n-h]_{q^n/q}$ MRD code arising from a maximum $h$-scattered $\F_q$-subspace $U$ of $V(m(h+1),q^n)$, from \cite[Thm. 3.3]{marino2023evasive} we get $d(\mathcal{C})\geq mn-h$. Since, due to its size, $U$ cannot be $(h+1)$-scattered, we have that $mn-h-2$ is the largest possible value for the $(m-1)(h+1)$-th generalized rank weight of a code with these parameters.    
    }
\end{remark}

\begin{theorem}\label{genweis1}
      The subspace $V_{\mathbf{A},h}$ is $(m(h+1)-(h+1),mn-(2n-2h-2))$-evasive for every $\textbf{A}\in\mathcal{A}$ with $h\geq2$.
\end{theorem}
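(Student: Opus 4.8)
The plan is to convert the evasiveness statement into a counting bound for the system $(S')$. An $\mathbb{F}_{q^n}$-subspace $H$ of dimension $m(h+1)-(h+1)$ has codimension $h+1$, so it is cut out by $h+1$ independent linear equations, and I must show that $\dim_{\mathbb{F}_q}(V_{\mathbf{A},h}\cap H)\le mn-(2n-2h-2)$, i.e.\ that $(S')$ has at most $q^{mn-(2n-2h-2)}$ solutions. As in Theorems \ref{hug2} and \ref{thm:evas} I would treat the $x_{i,j}=x_i^{q^j}$ as independent variables; then the number of solutions equals $q^{mn-N}$, where $N$ is the $\mathbb{F}_{q^n}$-rank of the system generated by all Frobenius powers of the $h+1$ equations. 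Hence it suffices to exhibit $N\ge 2(n-h-1)$ independent linear forms.

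The basic tool is a leading-term (staircase) estimate. Grouping the forms by their top Frobenius power makes the associated coefficient matrix block triangular, so its rank is at least $\sum_{p}r_p$, where $r_p$ is the rank of the leading coefficient vectors among the forms whose top power is $q^p$. In particular, if two equations can be raised to suitable $q$-powers so that they share the same top power $q^b$ with $\mathbb{F}_{q^n}$-independent leading coefficients, their two Frobenius orbits give $r_p\ge 2$ at every level $p=b,b+1,\dots,n-1$, hence at least $2(n-b)$ independent forms.

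Next I would run the case analysis dictated by \eqref{Eq:matrix}, using that $s_1+\cdots+s_{h+1}=h+1$ and that each $A_{i-1}^{(i)}$ has full rank. If $s_k\ge 2$ for some $k\le h$, the two equations of $\mathcal{S}_k$ have window $[0,k-1]$ and independent leading coefficients, yielding $2(n-k+1)\ge 2(n-h-1)$ forms. If $s_{h+1}\ge 2$, the two equations of $\mathcal{S}_{h+1}$ have top power $q^{h+1}$, and after substitution their leading coefficients are the images of two independent rows of $A_h^{(h+1)}$ under the map $c\mapsto(c_m\alpha_1,c_1\alpha_2,\dots,c_{m-1}\alpha_m)$, which is invertible (a cyclic shift composed with scaling by the nonzero $\alpha_j$); they are therefore independent and give $2(n-h-1)$ forms. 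The only remaining configuration is $s_1=\cdots=s_{h+1}=1$.

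This last case is the crux. Here $\mathcal{S}_1$ yields one equation $E_1=v_1\cdot\underline{x}$ with $v_1\neq 0$, so $\ell:=v_1\cdot\underline{x}$ and all its conjugates vanish on the solution set, with $\ell^{q^p}$ available at every level $p$ and leading coefficient $v_1^{q^p}$; while $\mathcal{S}_{h+1}$ yields $E_{h+1}$ whose $q^{h+1}$- and $q^{h}$-coefficients are $T(c)$ and $c$, where $c$ is a nonzero row of $A_h^{(h+1)}$ and $T$ is the invertible map above. Pairing the orbit of $\ell$ with that of $E_{h+1}$: if $T(c)$ is not proportional to $v_1^{q^{h+1}}$, then $r_p\ge 2$ for $p=h+1,\dots,n-1$ and we are done; otherwise I subtract the appropriate conjugate of $\ell$ to cancel the top term of $E_{h+1}$, leaving a form with top power $q^h$ and leading coefficient $c$, and if $c$ is not proportional to $v_1^{q^h}$ the same pairing gives $r_p\ge 2$ for $p=h,\dots,n-1$. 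The main obstacle is the surviving subcase in which both $c\parallel v_1^{q^h}$ and $T(c)\parallel v_1^{q^{h+1}}$: writing these componentwise gives the cyclic relations $\mu\,\alpha_j\,v_{1,j-1}^{q^h}=\lambda\,v_{1,j}^{q^{h+1}}$, and chaining them around the cycle expresses $K_{\mathbf{A}}$ (up to a harmless Frobenius twist) as a $(1+q+\cdots+q^{m-1})$-power, contradicting $\mathbf{A}\in\mathcal{A}$. Thus this subcase cannot occur, so in every case $N\ge 2(n-h-1)$, which is exactly the asserted evasiveness.
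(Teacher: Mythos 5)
Your proposal is correct and follows essentially the same route as the paper: the same reduction to bounding the solutions of $(S')$, the same case split on the $s_k$ via the block structure of \eqref{Eq:matrix}, the same Frobenius-orbit/leading-coefficient rank count giving $2(n-h-1)$ independent forms, and the same final contradiction with $K_{\mathbf{A}}$ not being a $(1+q+\cdots+q^{m-1})$-power in the all-$s_i=1$ configuration. Your only (harmless, slightly cleaner) deviations are that you ignore the intermediate equations of $\mathcal{S}_2,\dots,\mathcal{S}_h$ in the last case rather than first testing whether they add information, and you resolve the final dichotomy by cancelling the $q^{h+1}$-term and pairing at level $q^h$ instead of first proving, as the paper does, that $(c_1^q,\dots,c_m^q)$ cannot be proportional to $(\alpha_1c_m,\dots,\alpha_mc_{m-1})$.
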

\begin{proof}
     Consider an $\mathbb{F}_{q^n}$-subspace $H$ of $\F_{q^n}^{m(h+1)}$ of dimension $(m-1)(h+1)$ and thus defined by $(h+1)$ independent linear homogeneous equation in $z_1,\ldots,z_{m(h+1)}$. 

We use the same notation as in \eqref{Eq:matrix} for  Systems $(S)$ and $(S^{\prime})$.
Our aim is to bound the number of solutions of $(S^{\prime})$ by $q^{mn-(2n-2h-2)}$.

We distinguish two cases.
\begin{enumerate}
    \item $s_{k}\geq 2$ for some $k \in \{1,\ldots,h+1\}$.

     Let $\mathcal{S}_k=\{E_1,E_2\}$.  
 The equations $(E_1)^{q^{\ell}},(E_2)^{q^\ell}$, $\ell=0,\ldots,n-k-1$ form a system of $2(n-k)$
 independent equations in the variables $x_{i,j}$ and thus the number of solutions of $(S^{\prime})$ is bounded by  $q^{mn-2(n-k)}$. Note that $mn-2(n-k)\leq mn-2(n-h-1)$.

 \item $s_1=s_2=\cdots=s_h=s_{h+1}=1$

Let $\mathcal{S}_1=\{E_1\}$. The equation $E$ on its own sets a maximum limit of $q^{n(m-1)}$ for the total number of solutions for the entire system. Let $\mathcal{S}_i=\{E_i\}$, $i=2,\ldots,h+1$. Suppose that there exists $j=2,\ldots,h-1$ such that the equation $E$ (of degree $q^{j-1}$), obtained eliminating from $E_{j}$ a variable (say $x_1$) using $E_1$,   is non-vanishing. Now, the equations $(E)^{q^\ell}$, $\ell=0,\ldots,n-j$, form a system of $(n-j+1)$ independent equations in $(m-1)n$ variables, $x_{i,j}$, $i=2,\ldots,m$, $j=0,\ldots,n-1$, and thus the number of solutions of $(S^{\prime})$ is at most $q^{n(m-1)-n+h-1}<q^{nm-2n+2h+2}$.


Suppose that there exists no such a $j$. Thus, System $(S^{\prime})$ reads as
$$\begin{cases} A^{(1)}_{0}\cdot(x_{1},\dots,x_{m})^T=0\\
A^{(h+1)}_0\cdot(x_1,\dots,x_m)^T+A^{(h+1)}_1\cdot(x_{1}^q,\dots,x_{m}^q)^T+\cdots+A^{(h+1)}_{h-1}\cdot(x_{1}^{q^{h-1}},\dots,x_{m}^{q^{h-1}})^T+\\+(c_1,\dots,c_m)\cdot(x_1\qih,\dots,x_m\qih)^T+(\alpha_1c_m,\dots,\alpha_mc_{m-1})\cdot(x_1\qjh,\dots,x_m\qjh)^T=0.
\end{cases},$$
for some $c_1,\ldots,c_m\in \mathbb{F}_{q^n}$.



 First, observe that $(c_1^q,c_2^q,\ldots,c_m^q)$ cannot be proportional to $(\alpha_1c_m,\alpha_2c_1,\ldots,\alpha_mc_{m-1})$.
In fact, consider $\ell \in \mathbb{F}^*_{q^n}$  such that 
$$\ell(c_1^q,c_2^q,\ldots,c_m^q)=(\alpha_1c_m,\alpha_2c_1,\ldots,\alpha_mc_{m-1})$$
and observe that $c_i$ being zero for any $i=1,\dots,m$ yields  $(c_1,\dots,c_m)=(0,\dots,0)$, a contradiction.

So
\begin{eqnarray*}
    c_1^{q^{m}}=\ell^{-q^{m-1}}\alpha_{1}^{q^{m-1}}c_{m}^{q^{
    m-1}}=\cdots=L^{-1}\alpha_1^{q^{m-1}}\alpha_{2}^{q^{m-2}}\cdots\alpha_{m}c_1={L}^{-1}K_{\textbf{A}}c_1,   
\end{eqnarray*}
where
$L=\ell^{(q^{m-1}+q^{m-2}+\cdots+q+1)}$. So we obtain
$$K_{\textbf{A}}=(c_1^{q-1}\ell)^{(q^{m-1}+q^{m-2}+\cdots+q+1)},$$ a contradiction.

If $\left(A^{(1)}_{0}\right)^{q^{h+1}}$ is not proportional to $(\alpha_1c_m,\dots,\alpha_mc_{m-1})$, we can read the system $(S^{\prime})$ as 
$$\begin{cases}
    \left(A^{(1)}_{0}\right)^{q^{h+1}}\cdot(x_{1}^{q^{h+1}},\dots,x_{m}^{q^{h+1}})^T=0\\
    B_0\cdot(x_1,\dots,x_m)^T+B_1\cdot(x_{1}^q,\dots,x_{m}^q)^T+\cdots+B_{h-1}\cdot(x_{1}^{q^{h-1}},\dots,x_{m}^{q^{h-1}})^T+\\+(c_1,\dots,c_m)\cdot(x_1\qih,\dots,x_m\qih)^T+(\alpha_1c_m,\dots,\alpha_mc_{m-1})\cdot(x_1\qjh,\dots,x_m\qjh)^T=0,
\end{cases}$$
and we can proceed as in the case (a), and obtain $q^{mn-2(n-h-1)}$ as upper bound for the number of solutions of $(S^{\prime})$.

If $ \left(A^{(1)}_{0}\right)^{q^{h+1}}$ is proportional to $(\alpha_1c_m,\dots,\alpha_mc_{m-1})$, then it cannot be proportional to $(c_1^q,c_2^q,\ldots,c_m^q)$, so we can read the system $(S^{\prime})$ as 
$$\begin{cases}
\left(A^{(1)}_{0}\right)^{q^{h+1}}\cdot(x_{1}^{q^{h+1}},\dots,x_{m}^{q^{h+1}})^T=0\\
    B_0^{q}\cdot(x_1^{q},\dots,x_m^{q})^T+B_1^{q}\cdot(x_{1}^{q^2},\dots,x_{m}^{q^2})^T+\cdots+B_{h-1}^{q}\cdot(x_{1}^{q^{h}},\dots,x_{m}^{q^{h}})^T+\\+(c_1^{q},\dots,c_m^{q})\cdot(x_1\qjh,\dots,x_m\qjh)^T=0,
\end{cases}$$
and we can proceed as in the case (a), and obtain again $q^{mn-2(n-h-1)}$ as upper bound for the number of solutions of $(S^{\prime})$.

\end{enumerate}
\end{proof}

\noindent The previous theorem can be improved under suitable assumptions on $\textbf{A}$.

Indeed, let   $$\Pi_{i}:=\alpha_{i}^{q^{m-1}}\alpha_{i-1}^{q^{m-2}} \cdots \alpha_{i+2}^{q}  \alpha_{i+1},$$
where the indices of $\alpha_i$ are modulo $m$ in the range $[1,\dots,m]$, and let   $$\mathcal{B}:=\left\{\textbf{A}\in\mathcal{A}\mid\frac{\Pi_{\delta+2}}{\Pi_{2}} \textrm{ is not a $(q^{m}-1)$-power in $\field$  for any } \delta=1,\dots,m-1\right\}.$$

Note that such a set $\mathcal{B}$ is not empty when $m|n$ (cf. \cite[Prop. 2.13]{BGM2024}).

\begin{theorem}\label{genweis2}
     The subspace $V_{\mathbf{A},h}$ is $(m(h+1)-s(h+1),mn-(s+1)(n-h-1))$-evasive for every $s=2,\dots,m-2$, $\textbf{A}\in\mathcal{B}$, $h\geq2$.
\end{theorem}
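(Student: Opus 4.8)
The plan is to follow the blueprint of Theorem \ref{genweis1}, reducing everything to a bound on the number of solutions of the system $(S')$. Here $H$ is an $\mathbb{F}_{q^n}$-subspace of dimension $(m-s)(h+1)$, hence cut out by $s(h+1)$ independent linear forms, and in the notation of \eqref{Eq:matrix} this gives $s_1+\cdots+s_{h+1}=s(h+1)$ with $s_k\le m$ for each $k$. The goal is to produce $(s+1)(n-h-1)$ linearly independent equations in the linearized variables $x_{i,j}=x_i^{q^j}$, which yields the required bound $q^{mn-(s+1)(n-h-1)}$. The first step is the elementary but decisive remark that, since each $s_k\le m$ and $\sum_k s_k=s(h+1)$, exactly one of two situations occurs: either $s_k\ge s+1$ for some $k$, or $s_k=s$ for all $k$. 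The hypothesis $s\le m-2$ makes the value $s+1\le m-1$ admissible throughout.

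In the first (generic) case I would choose $s+1$ equations in $\mathcal{S}_k$ with independent leading blocks, which is possible because $A_{k-1}^{(k)}$ has full rank, and raise each of them to the powers $q^\ell$, $\ell=0,\ldots,n-k-1$. The leading-term staircase argument already used in Theorem \ref{genweis1} shows that the resulting $(s+1)(n-k)$ equations are independent; since $k\le h+1$ we have $(s+1)(n-k)\ge(s+1)(n-h-1)$, and this case is complete.

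The balanced case $s_1=\cdots=s_{h+1}=s$ is the heart of the argument. Raising the $s$ equations of the top level $\mathcal{S}_{h+1}$ (whose leading degree is $h+1$, coming from the $\alpha_{i+1}x_{i+1}^{q^{h+1}}$ terms of the $f_i$) to the powers $q^\ell$, $\ell=0,\ldots,n-h-2$, produces $s(n-h-1)$ independent equations occupying the top degrees $h+1,\ldots,n-1$ with multiplicity $s$. Since $s+1\le m-1$ there is still room for one more independent direction at each of these degrees, so it suffices to manufacture a single further staircase of length $n-h-1$, independent from the previous ones. I would obtain it by eliminating the block-$1$ variables through the full-rank forms of $\mathcal{S}_1$ (and their $q$-powers) and then combining the surviving lower-level equations so as to create a term of degree $h+1$ whose coefficient vector lies outside the span of the $s$ leading directions of $\mathcal{S}_{h+1}$. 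The only way this construction can fail is a complete cyclic alignment of the leading directions across all levels; tracing this degeneracy through the recurrences relating consecutive $\alpha_i$ (exactly as the proportionality $(c_1^q,\ldots,c_m^q)\parallel(\alpha_1c_m,\ldots,\alpha_mc_{m-1})$ was excluded in Theorem \ref{genweis1}) would force a relation of the form ``$\Pi_{\delta+2}/\Pi_2$ is a $(q^m-1)$-power in $\mathbb{F}_{q^n}$'' for some $\delta$, contradicting $\mathbf{A}\in\mathcal{B}$. It is natural to organize this degenerate analysis either by peeling off $\mathcal{S}_1$ and descending from $h$ to $h-1$ (as in Theorem \ref{thm:evas}), or by induction on $s$ with base case Theorem \ref{genweis1}, which applies since $\mathcal{B}\subseteq\mathcal{A}$.

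I expect the genuine difficulty to lie entirely in this last extraction: controlling which combinations of the lower-level equations survive the elimination of block $1$, verifying that the newly created degree-$(h+1)$ direction is independent of the $s$ directions already used, and showing that the precise obstruction to this independence is governed by the products $\Pi_i$, so that $\mathcal{B}$ can be invoked. The generic case and the bookkeeping of leading degrees (being careful that raising the top-level equations does not wrap around modulo $n$ and spoil the staircase counts) are routine by comparison.
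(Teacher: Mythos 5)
Your overall architecture coincides with the paper's: the same dichotomy (some $s_k\ge s+1$ versus $s_1=\cdots=s_{h+1}=s$), the same staircase count of $(s+1)(n-k)\ge(s+1)(n-h-1)$ independent equations in the linearized variables in the generic case, and, in the balanced case, the same goal of producing one extra independent direction at degree $q^{h+1}$ alongside the $s$ leading directions of $\mathcal{S}_{h+1}$. However, you have explicitly deferred the step that carries all the content, and your description of it --- that the degeneracy is traced ``exactly as'' the proportionality excluded in Theorem \ref{genweis1} --- underestimates what is needed. In Theorem \ref{genweis1} the leading blocks are single row vectors, and the alignment $(c_1^q,\ldots,c_m^q)\parallel(\alpha_1c_m,\ldots,\alpha_mc_{m-1})$ collapses to the scalar relation $K_{\mathbf{A}}=(c_1^{q-1}\ell)^{1+q+\cdots+q^{m-1}}$, excluded by $\mathbf{A}\in\mathcal{A}$. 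For $s\ge2$ the analogous degeneracy is an equality of $s$-dimensional rowspans, $L\bigl(A^{(h+1)}_{h}\bigr)^q=\widetilde{A}^{(h+1)}_{h}$ for some $L\in\mathrm{GL}(s,q^n)$, where $\widetilde{A}^{(h+1)}_{h}$ is the cyclically shifted, $\alpha$-scaled matrix. Ruling this out is the one genuinely new ingredient: the paper iterates the column relations to get $H_i^{q^{m}}=\Pi_i\overline{L}H_i$, then uses that $A^{(h+1)}_{h}$ has rank $s<m$, so some column satisfies a nontrivial dependence $H_{i_{s+1}}=\sum_j\lambda_jH_{i_j}$; applying the twisted relation to both sides forces $\lambda_j^{q^{m}}=(\Pi_{i_{s+1}}/\Pi_{i_j})\lambda_j$, which is precisely what membership in $\mathcal{B}$ forbids. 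Your outline never invokes the rank deficiency of the leading block, and without it one cannot reach the $(q^m-1)$-power condition; this is also why the hypothesis is $\mathbf{A}\in\mathcal{B}$ rather than $\mathbf{A}\in\mathcal{A}$, a distinction your argument leaves unexplained.

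A second, smaller omission: even granting the non-alignment lemma, your ``one further staircase'' is not automatic. The extra direction at degree $q^{h+1}$ comes either from $\bigl(A^{(1)}_{0}\bigr)^{q^{h+1}}$ or, when its rowspan happens to coincide with $\mathrm{rowspan}\bigl(\widetilde{A}^{(h+1)}_{h}\bigr)$, from the reduced $\mathcal{S}_{h+1}$ equations raised by one further power of $q$, whose leading block is $\bigl(A^{(h+1)}_{h}\bigr)^q$; only the non-alignment lemma guarantees this fallback succeeds. The paper also disposes separately of the possibility that some intermediate level $\mathcal{S}_j$ survives elimination by $\mathcal{S}_1$ (which yields a stronger bound outright), so that the balanced case genuinely reduces to the two blocks $\mathcal{S}_1$ and $\mathcal{S}_{h+1}$. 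These are exactly the points you flag as ``the genuine difficulty,'' so the proposal is a faithful road map of the paper's proof rather than a proof.
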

\begin{proof}
   Consider an $\mathbb{F}_{q^n}$-subspace $H$ of $V(m(h+1),q^{n})$ of dimension $(m-s)(h+1)$ and thus defined by $s(h+1)$ independent linear homogeneous equation in $z_1,\ldots,z_{m(h+1)}$. 
    
    

We use the same notation as in \eqref{Eq:matrix} for  Systems $(S)$ and $(S^{\prime})$.

Our aim is to bound the number of solutions of $(S^{\prime})$ by $q^{mn-(s+1)(n-h-1)}$.

We distinguish two cases.
\begin{enumerate}
    \item $s_{k}\geq s+1$ for some $k \in \{1,\ldots,h+1\}$.

     Let $\mathcal{S}_k=\{E_1,E_2,\dots,E_{s+1}\}$.  
 The equations $(E_1)^{q^{\ell}},(E_2)^{q^\ell},\dots,(E_{s+1})^{q^\ell}$, $\ell=0,\ldots,n-k-1$, form a system of $(s+1)(n-k)$
 independent equations in the variables $x_{i,j}$ and thus the number of solutions is bounded by  $q^{mn-(s+1)(n-k)}$. Note that $mn-(s+1)(n-k)\leq mn-(s+1)(n-h-1)$.

 \item $s_1=s_2=\cdots=s_h=s_{h+1}=s$.
 Let $\mathcal{S}_1=\{E_1,\ldots,E_{s}\}$. These equations  on their own set a maximum  of $q^{n(m-s)}$ for the total number of solutions for the entire system.  Suppose that there exists $j=2,\ldots,h-1$ such that the equations in $\mathcal{S}_j$ (of degree $q^{j-1}$), once combined $\mathcal{S}_1$, are not all vanishing. Let $E$ be one of these equations: it depends on $(m-s)$ variables $x_{i_1},\ldots,x_{i_{m-s}}$. Now,  $(E)^{q^\ell}$, $\ell=0,\ldots,n-j$, form a system of $(n-j+1)$ independent equations in $(m-s)n$ variables, $x_{i_\ell,j}$, $\ell=1,\ldots,m-s$, $j=0,\ldots,n-1$, and thus the number of solutions of $(S^{\prime})$ is at most $q^{n(m-s)-n+h-1}<q^{nm-(s+1)n+(s+1)(h+1)}$.


Suppose that there exists no such a $j$. Thus, System $(S^{\prime})$ reads as
$$\begin{cases} \left(A^{(1)}_{0}\right)^{q^{h+1}} (x_{1}^{q^{h+1}},\dots,x_{m}^{q^{h+1}})^T=0,\\
A^{(h+1)}_0 (x_1,\dots,x_m)^T+A^{(h+1)}_1 (x_{1}^q,\dots,x_{m}^q)^T+\cdots+A^{(h+1)}_{h-1} (x_{1}^{q^{h-1}},\dots,x_{m}^{q^{h-1}})^T+\\+A^{(h+1)}_{h} (x_1\qih,\dots,x_m\qih)^T+\widetilde{A}^{(h+1)}_{h}(x_1\qjh,\dots,x_m\qjh)^T=0,
\end{cases}$$
where 
$$A^{(h+1)}_{h}=\begin{pmatrix}
H_1&H_2&\ldots&H_m
\end{pmatrix},  \quad 
\widetilde{A}^{(h+1)}_{h}:= \begin{pmatrix}
\alpha_1 H_m&\alpha_2 H_1&\ldots&\alpha_mH_{m-1}\end{pmatrix}.$$



First, we prove by contradiction that the rowspan of $\left(A^{(h+1)}_{h}\right)^q$ is not equal to the rowspan of $\widetilde{A}^{(h+1)}_{h}$.

Let $L \in \mathrm{GL}(s,q^n)$ be such that 
$$L\left(A^{(h+1)}_{h}\right)^q=\widetilde{A}^{(h+1)}_{h},$$
that is 
$$L H_1^{q}=\alpha_1 H_m, \; L H_2^{q}=\alpha_2 H_1, \ldots, \; L H_{m-1}^{q}=\alpha_{m-1} H_{m-2},\; L H_m^{q}=\alpha_m H_{m-1},$$
i.e.
$$H_1^{q}=\alpha_1 L^{-1}H_m, \; H_2^{q}=\alpha_2 L^{-1} H_1, \ldots, \; H_{m-1}^{q}=\alpha_{m-1} L^{-1} H_{m-2},\; H_m^{q}=\alpha_mL^{-1} H_{m-1}.$$     

For any $i\in \{1,\ldots,m\}$
\begin{eqnarray*}
H_i^{q^{m}}&=&\alpha_{i}^{q^{m-1}} (L^{-1})^{q^{m-1}}H_{i-1}^{q^{m-1}}\\&=&\alpha_{i}^{q^{m-1}} (L^{-1})^{q^{m-1}}\alpha_{i-1}^{q^{m-2}} (L^{-1})^{q^{m-2}}H_{i-2}^{q^{m-2}}\\
&=&\alpha_{i}^{q^{m-1}}\alpha_{i-1}^{q^{(m-2)K}} \cdots \alpha_{i+2}^{q} (L^{-1})^{q^{m-1}} (L^{-1})^{q^{m-2}} \cdots (L^{-1})^{q} H_{i+1}^{q}\\
&=&\alpha_{i}^{q^{m-1}}\alpha_{i-1}^{q^{m-2}} \cdots \alpha_{i+2}^{q}  \alpha_{i+1} (L^{-1})^{q^{m-1}} (L^{-1})^{q^{m-2}} \cdots (L^{-1})^{q}(L^{-1}) H_{i}.
\end{eqnarray*}
Write the above equation as 
$$H_i^{q^{m}}=\Pi_{i} \overline{L} H_i,$$
where $\overline{L}=(L^{-1})^{q^{m-1}} (L^{-1})^{q^{m-2}} \cdots (L^{-1})^{q}(L^{-1})$.

Since the rank of $A^{(h+1)}_{h}$ is $s$ and $s<m$ there exist 
linearly independent columns $H_{i_1},\ldots, H_{i_s}$ such that $H_{i_{s+1}}$ can be written as
$$H_{i_{s+1}}=\lambda_1 H_{i_1}+\cdots+ \lambda_sH_{i_s},$$
for some $\lambda_i \in \mathbb{F}_{q^n}$ not all vanishing (since $H_{i_{s+1}}$ being the zero column would yield $A^{(h+1)}_{h}=0$, a contradiction).

Thus, 

\begin{eqnarray*}
(\lambda_1 H_{i_1}+\cdots+ \lambda_sH_{i_s})^{q^{m}}&=&\Pi_{i_{s+1}} \overline{L} (H_{i_{s+1}})=\Pi_{i_{s+1}} \overline{L} (\lambda_1 H_{i_1}+\cdots+ \lambda_sH_{i_s})\\
&=&\Pi_{i_{s+1}} (\lambda_1 H_{i_1}^{q^{m}}/\Pi_{i_1}+\cdots+ \lambda_s  H_{i_s}^{q^{m}}/\Pi_{i_s}).
\end{eqnarray*}
Since $H_{i_1}^{q^{m}},\ldots, H_{i_s}^{q^{m}}$ are also independent,
$$\lambda_j^{q^{m}}=\frac{\Pi_{i_{s+1}}}{\Pi_{i_j}}\lambda_j, \qquad \forall j=1,\ldots,s.$$

This contradicts that $\frac{\Pi_{\delta+2}}{\Pi_{2}}$ is not a $(q^{m}-1)$-power for any $\delta= {1,\ldots,m}$, since $\Pi_{\delta}^{q^{\gamma }}=(\xi_{\delta,\gamma})^{q^{m}-1}\Pi_{\delta+\gamma}$ for some $\xi_{\delta,\gamma}\in\field$, so $\text{rowspan}(A^{(h+1)}_{h})\neq\text{rowspan}(\widetilde{A}^{(h+1)}_{h})$.

We distinguish now two subcases:
\begin{enumerate}
    \item $\text{rowspan}(\widetilde{A}^{(h+1)}_{h})\neq\text{rowspan}\left(A^{(1)}_{0}\right)^{q^{h+1}} $\\
    It is readily seen that $(S^{\prime})$ contains at least $s+1$ independent equations of degree $h+1$, and thus, arguing as above, we deduce that System $(S^\prime)$ as at most $q^{mn-(s+1)(n-h-1)}$ solutions.
\item $\text{rowspan}(\widetilde{A}^{(h+1)}_{h})=\text{rowspan}\left(A^{(1)}_{0}\right)^{q^{h+1}} $\\
Then $(S^{\prime})$ is equivalent to  
$$\begin{cases} \left(A^{(1)}_{0}\right)^{q^{h+1}} (x_{1}^{q^{h+1}},\dots,x_{m}^{q^{h+1}})^T=0,\\
\left(A^{(h+1)}_0\right)^q (x_1^q,\dots,x_m^q)^T+\left(A^{(h+1)}_1\right)^q(x_{1}^{q^2},\dots,x_{m}^{q^2})^T+\cdots+\\+\left(A^{(h+1)}_{h-1}\right)^q (x_{1}^{q^{h}},\dots,x_{m}^{q^{h}})^T+\left({A}^{(h+1)}_{h}\right)^q(x_1\qjh,\dots,x_m\qjh)^T=0.
\end{cases}$$
Since $\text{rowspan}({A}^{(h+1)}_{h})\neq\text{rowspan}\left(A^{(1)}_{0}\right)^{q^{h+1}},$ we can proceed as before and obtain the desired bound on the number of solutions of $(S^{\prime})$.
\end{enumerate}

\end{enumerate}

\end{proof}
In the sequel, we will denote by $\mathcal{C}$ and $\mathcal{G}_{\mathbf{A},h}$ an MRD code arising from an $h$-scattered $\F_q$-subspace $U$ of $V(m(h+1),q^n)$ obtained as direct sum of $m$ maximum $h$-scattered $\F_q$-subspaces of $V((h+1),q^n)$ and the MRD code associated with the set $V_{\mathbf{A},h}$, respectively.

\begin{theorem}
    The following hold.
    \begin{enumerate}
    \item If $n> (s+1)(h+1)$, then for any $s=2,\dots,m-2$ and for any $\textbf{A}\in\mathcal{B}$
    $$d_{s(h+1)}(\mathcal{G}_{\mathbf{A},h})>d_{s(h+1)}(\mathcal{C}).$$
    \item If $n>2h+2$, then for any $\textbf{A}\in\mathcal{A}$
    $$d_{h+1}(\mathcal{G}_{\mathbf{A},h})> d_{h+1}(\mathcal{C}).$$
    \item  For any $\textbf{A}\in\mathcal{A}$ $$d_{(m-1)(h+1)}(\mathcal{G}_{\mathbf{A},h}) > d_{(m-1)(h+1)}(\mathcal{C}).$$
    \end{enumerate}
\end{theorem}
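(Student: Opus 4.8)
The plan is to derive each of the three inequalities directly from the evasiveness results of the previous section (Theorems \ref{thm:evas}, \ref{genweis1} and \ref{genweis2}), turning them into \emph{lower} bounds for the generalized weights of $\mathcal{G}_{\mathbf{A},h}$, and then to compare these against the exact values (resp.\ upper bounds) for the decomposable code $\mathcal{C}$ supplied by Proposition \ref{prop:genweights_directsum}. The bridge is the standard dictionary between evasiveness and generalized weights. Writing $U=V_{\mathbf{A},h}$, which is an $[mn,m(h+1)]_{q^n/q}$ system, the definition of the $\rho$-th generalized weight says that $d_\rho$ is $mn$ minus the largest value of $\dim_{\F_q}(U\cap H)$ over all $\F_{q^n}$-subspaces $H$ of codimension $\rho$, i.e.\ of dimension $m(h+1)-\rho$. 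Hence if $U$ is $(m(h+1)-\rho,\,b)$-evasive, then every such $H$ satisfies $\dim_{\F_q}(U\cap H)\le b$, and therefore $d_\rho(\mathcal{G}_{\mathbf{A},h})\ge mn-b$. For the comparison this lower bound suffices, since we will pair it with the known value or upper bound of the corresponding weight of $\mathcal{C}$.

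For part (3) I would take $\rho=(m-1)(h+1)$, so that the relevant subspaces $H$ have dimension $h+1$. Theorem \ref{thm:evas} gives $(h+1,h+2)$-evasiveness, whence $d_{(m-1)(h+1)}(\mathcal{G}_{\mathbf{A},h})\ge mn-h-2$, while Proposition \ref{prop:genweights_directsum} gives $d_{(m-1)(h+1)}(\mathcal{C})=(m-1)n$. The strict inequality $mn-h-2>(m-1)n$ is equivalent to $n>h+2$, which holds under the standing hypothesis $h\le n-3$. For part (2) I would take $\rho=h+1$, so that $H$ has dimension $(m-1)(h+1)$; Theorem \ref{genweis1} gives the weight bound $mn-(2n-2h-2)$, hence $d_{h+1}(\mathcal{G}_{\mathbf{A},h})\ge 2n-2h-2$. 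Since $d_{h+1}(\mathcal{C})=n$ by Proposition \ref{prop:genweights_directsum}, the claim reduces to $2n-2h-2>n$, i.e.\ to $n>2h+2$, which is precisely the stated hypothesis.

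For part (1) I would take $\rho=s(h+1)$, so that $H$ has dimension $(m-s)(h+1)$. Theorem \ref{genweis2}, valid for $\mathbf{A}\in\mathcal{B}$ and $s=2,\dots,m-2$, yields $\dim_{\F_q}(U\cap H)\le mn-(s+1)(n-h-1)$ and therefore $d_{s(h+1)}(\mathcal{G}_{\mathbf{A},h})\ge (s+1)(n-h-1)$, whereas Proposition \ref{prop:genweights_directsum} gives $d_{s(h+1)}(\mathcal{C})\le sn$. Expanding, $(s+1)(n-h-1)>sn$ is equivalent to $n>(s+1)(h+1)$, exactly the hypothesis imposed. Thus in each of the three cases the desired strict inequality is forced by the arithmetic condition on $n$ in the statement.

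Since all the substantive work has already been carried out in the evasiveness theorems and in Proposition \ref{prop:genweights_directsum}, no new computation is required; the proof is essentially bookkeeping. The only point demanding genuine care, and the place where an error is most likely to slip in, is the index-matching: one must correctly identify each generalized-weight index $\rho$ with the codimension $m(h+1)-\rho$ of the subspaces appearing in the corresponding evasiveness statement, and then verify that the strict inequality between the two weights collapses exactly onto the hypothesis on $n$. Once the indices are aligned, each case follows from a single elementary manipulation.
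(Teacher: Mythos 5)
Your proposal is correct and follows essentially the same route as the paper: each part pairs the lower bound on a generalized weight of $\mathcal{G}_{\mathbf{A},h}$ obtained from the corresponding evasiveness theorem (Theorem \ref{thm:evas} for part (3), Theorem \ref{genweis1} for part (2), Theorem \ref{genweis2} for part (1)) with the upper bound on the same weight of $\mathcal{C}$ from Proposition \ref{prop:genweights_directsum}, and checks that the hypothesis on $n$ forces the strict inequality. Your explicit spelling-out of the evasiveness-to-weight dictionary and of the arithmetic reductions is just a more detailed version of what the paper leaves implicit.
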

\begin{proof}
\begin{enumerate}
    \item  From Theorem \ref{genweis2} we have $d_{s(h+1)}(\mathcal{G}_{\mathbf{A},h})\geq (s+1)(n-h-1)$. In addition, Proposition \ref{prop:genweights_directsum} yields $d_{s(h+1)}(\mathcal{C})\leq sn$. The claim follows.
    \item  From Theorem \ref{genweis1} we have $d_{h+1}(\mathcal{G}_{\mathbf{A},h})\geq 2n-2h-2$. Proposition \ref{prop:genweights_directsum} yields $d_{h+1}(\mathcal{C})\leq n$ and thus the claim.
    \item From Theorem \ref{thm:evas} we have $d_{(m-1)(h+1)}(\mathcal{G}_{\mathbf{A},h})\geq mn-h-2$. By  Proposition \ref{prop:genweights_directsum},  $d_{(m-1)(h+1)}(\mathcal{C})\leq (m-1)n$, and thus the claim follows.
\end{enumerate}
\end{proof}

\begin{remark}Arguing as in the proof of \cite[Theorem 2.11]{BGM2024}, we can note that each subspace $V_{\mathbf{A},h}$ is  $h$-scattered in infinitely many extensions of $\F_{q^n}$.
\end{remark}

\section*{Acknowledgements}
The authors thank the Italian National Group for Algebraic and Geometric Structures and their Applications (GNSAGA—INdAM)
which supported the research. 

\section*{Declarations}
{\bf Conflicts of interest.} The authors have no conflicts of interest to declare that are relevant to the content of this
article.

\bibliographystyle{abbrv}

\begin{thebibliography}{10}

\bibitem{BBL2000}
S.~Ball, A.~Blokhuis, and M.~Lavrauw.
\newblock Linear {$(q+1)$}-fold blocking sets in {${\rm PG}(2,q^4)$}.
\newblock {\em Finite Fields Appl.}, 6(4):294--301, 2000.

\bibitem{BaCsMT2020}
D.~Bartoli, B.~Csajb{\'o}k, G.~Marino, and R.~Trombetti.
\newblock Evasive subspaces.
\newblock {\em Journal of Combinatorial Designs}, 29(8):533--551, 2021.

\bibitem{BGGM}
D.~Bartoli, A.~Giannoni, F.~Ghiandoni, and G.~Marino.
\newblock A new family of 2-scattered subspaces and related {MRD} codes.
\newblock {\em arXiv preprint arXiv:2403.01506}, 2024.

\bibitem{BGM2024}
D.~Bartoli, A.~Giannoni, and G.~Marino.
\newblock New scattered subspaces in higher dimensions.
\newblock {\em arXiv preprint arXiv:2402.15223}, 2024.

\bibitem{BGMP2015}
D.~Bartoli, M.~Giulietti, G.~Marino, and O.~Polverino.
\newblock Maximum scattered linear sets and complete caps in {G}alois spaces.
\newblock {\em Combinatorica}, 38(2):255--278, 2018.

\bibitem{BMNV2022}
D.~Bartoli, G.~Marino, A.~Neri, and L.~Vicino.
\newblock Exceptional scattered sequences.
\newblock {\em arXiv preprint arXiv:2211.11477}, 2022.

\bibitem{BZ2018}
D.~Bartoli and Y.~Zhou.
\newblock Exceptional scattered polynomials.
\newblock {\em J. Algebra}, 509:507--534, 2018.

\bibitem{Berger}
T.~Berger.
\newblock Isometries for rank distance and permutation group of gabidulin
  codes.
\newblock {\em IEEE Trans. Inf. Theory}, 49 (11), 2003.

\bibitem{BL2000}
A.~Blokhuis and M.~Lavrauw.
\newblock Scattered spaces with respect to a spread in {PG}$(n, q)$.
\newblock {\em Geom. Dedicata}, 81(1):231--243, 2000.

\bibitem{CSMPZ2016}
B.~Csajb{\'o}k, G.~Marino, O.~Polverino, and F.~Zullo.
\newblock Maximum scattered linear sets and {MRD}-codes.
\newblock {\em J. Algebraic Combin.}, 46(3):517--531, 2017.

\bibitem{CsMPZ2019}
B.~Csajb{\'o}k, G.~Marino, O.~Polverino, and F.~Zullo.
\newblock Generalising the scattered property of subspaces.
\newblock {\em Combinatorica}, 41(2):237--262, 2021.

\bibitem{delsarte1978bilinear}
P.~Delsarte.
\newblock Bilinear forms over a finite field, with applications to coding
  theory.
\newblock {\em J. Combin. Theory Ser. A}, 25(3):226--241, 1978.

\bibitem{ducoat2015generalized}
J.~Ducoat and G.~Kyureghyan.
\newblock Generalized rank weights: a duality statement.
\newblock {\em Topics in Finite Fields}, 632:101--109, 2015.

\bibitem{gabidulin1985theory}
E.~M. Gabidulin.
\newblock Theory of codes with maximum rank distance.
\newblock {\em Problemy Peredachi Informatsii}, 21(1):3--16, 1985.

\bibitem{kurihara2015relative}
J.~Kurihara, R.~Matsumoto, and T.~Uyematsu.
\newblock Relative generalized rank weight of linear codes and its applications
  to network coding.
\newblock {\em IEEE Transactions On information theory}, 61(7):3912--3936,
  2015.

\bibitem{marino2023evasive}
G.~Marino, A.~Neri, and R.~Trombetti.
\newblock Evasive subspaces, generalized rank weights and near mrd codes.
\newblock {\em Discrete Mathematics}, 346(12):113605, 2023.

\bibitem{Randrianarisoa2020geometric}
T.~H. Randrianarisoa.
\newblock A geometric approach to rank metric codes and a classification of
  constant weight codes.
\newblock {\em Des. Codes Cryptogr.}, 88(7):1331--1348, 2020.

\bibitem{sheekey_new_2016}
J.~Sheekey.
\newblock A new family of linear maximum rank distance codes.
\newblock {\em Adv. Math. Commun.}, 10(3):475, 2016.

\bibitem{ShVdV}
J.~Sheekey and G.~V. de~Voorde.
\newblock Rank-metric codes, linear sets, and their duality.
\newblock {\em Designs, Codes and Cryptography}, 88(4):655--675, dec 2019.

\bibitem{zini2021scattered}
G.~Zini and F.~Zullo.
\newblock Scattered subspaces and related codes.
\newblock {\em Des. Codes Cryptogr.}, 89(8):1853--1873, 2021.

\end{thebibliography}

\Addresses
\end{document}